\documentclass{amsart}
\usepackage{amssymb}

\usepackage{graphicx}
\usepackage{subcaption}

\newtheorem{theorem}{Theorem}[section]
\newtheorem{lemma}[theorem]{Lemma}

\theoremstyle{definition}

\theoremstyle{remark}
\newtheorem{remark}[theorem]{Remark}

\numberwithin{equation}{section}

\newtheorem{proposition}[theorem]{Proposition}
\newtheorem{corollary}[theorem]{Corollary}
\newcommand\ch{{\mathcal{C}_h}}
\newcommand\ehx{{\mathcal{E}^x_h}}
\newcommand\ehy{{\mathcal{E}^y_h}}
\newcommand\eh{{\vec{\mathcal{E}_h}}}
\newcommand{\thmcite[2]}{Theorem~{#2}~of~\cite{#1}}

\usepackage{amsmath}
\usepackage{multirow}
\usepackage{algorithm}
\usepackage{amsrefs}
\usepackage{color}
\usepackage{optidef}

\usepackage[colorlinks=true,linkcolor=red,citecolor=green,filecolor=magenta,urlcolor=cyan]{hyperref}
\usepackage[noabbrev]{cleveref}
\crefname{proposition}{Proposition}{Propositions}

\begin{document}

\title[A Linearly Convergent Solver for the AC Equation with FH Potential]
{An Unconditionally Linearly Convergent ADMM Approach for the Allen-Cahn Equation with Flory-Huggins Potential}


\author{Peng Jiang}
\address{Corresponding author. School of Mathematical Sciences, Peking University, Beijing, China}
\curraddr{}
\email{2100012131@stu.pku.edu.cn}

\author{Shengtong Liang}
\address{School of Mathematical Sciences, Peking University, Beijing, China}
\curraddr{}
\email{liangshengtong@pku.edu.cn}

\author{Tiao Lu}
\address{HEDPS, CAPT, LMAM, School of Mathematical Sciences, Peking University, Beijing, China}
\curraddr{}
\email{tlu@pku.edu.cn}

\thanks{This work was partially supported by the National
Natural Science Foundation of China (Grant No. 12288101)}

\thanks{The authors thank Prof. Ruo Li of the School of Mathematical Sciences at Peking University, and Prof. Zhonghua Qiao of the Department of Applied Mathematics at Hong Kong Polytechnic University, for their valuable discussions.}

\dedicatory{}

\date{}

\subjclass[2020]{Primary 65H10, 65M06, 65M12, 65M22}

\keywords{Allen-Cahn equation, Flory-Huggins potential, unconditional linear convergence,
iterative solver, alternating direction method of multipliers.}

\begin{abstract}
The Allen-Cahn equation with Flory-Huggins potential is a fundamental and crucial model in phase field simulation for describing phase separation phenomena,
which serves as a core tool in diverse branches of natural sciences.
The numerical simulation of the Allen-Cahn equation is of great importance but poses significant challenges due to the strong nonlinearity and the presence of logarithmic singularities at $u=0,1$ in the Flory-Huggins potential.
In this paper, we consider convex splitting schemes to 
guarantee unconditional unique solvability,
which reduces the numerical simulation to solving a singular nonlinear system arising from spatial discretization at each time step.
We propose an iterative solver that is specifically designed for such systems based on the alternating direction method of multipliers (ADMM) approach. 
The scheme possesses properties such as bound preserving and discrete energy stability.
Building upon the recent unconditionally convergent ADMM framework for the Cahn-Hilliard equation (Li et al., 2026), our key theoretical contributions are twofold:
(a) a proof of unconditional convergence when the multiplier update step size $\alpha \in (0,\frac{\sqrt{5}+1}{2})$;
(b) a rigorous establishment of the linear convergence for the embedded ADMM solver.
This effectively liberates the solver from time-step constraints or strict separation conditions.
Comprehensive numerical experiments validate our proposed ADMM framework, where its theoretical predictions are fully substantiated in practice, showcasing efficiency and robustness.
\end{abstract}

\maketitle
\tableofcontents
\section{Introduction}
\label{section:introduction}
Phase field models \cites{chen2002phase,steinbach2013phase} were introduced to describe the diffusive nature of interfaces between two phases. 
It is known that the Allen-Cahn equation \cite{allen1979microscopic} serves as a fundamental phase field model. 
In this paper, we consider the Allen-Cahn equation of the form
\begin{equation}\label{eq:ac}
      \begin{cases}
    \frac{\partial u}{\partial t}-\epsilon^2\Delta u+f(u)=0, &\qquad (x,t)\in \Omega \times (0,+\infty),\\
    u|_{t=0}=u_0,&\qquad x\in \Omega,
\end{cases}
\end{equation}
where the domain $\Omega \subset \mathbb{R}^d$ with $d=2,3$. $u=u(x,t)$ is the concentration of a phase in a two-phase system and $u_0=u_0(x)$ is the initial data. 
The parameter $\epsilon>0$ is a constant which governs the typical length scale of an interface in the dynamical evolution \cites{wang1993thermodynamically,wheeler1992phase}. 
The nonlinear term $f(u)$ is the derivative of a potential function $F(u)$. 

Though the double-well potential is simpler because of its polynomial form, the Flory-Huggins potential is more realistic from the physical viewpoint \cite{flory1942thermodynamics}. 
The logarithmic Flory-Huggins free energy potential is of the form
\begin{equation}
    F(u)=u\log(u)+(1-u)\log(1-u)+\theta(u-u^2),
\end{equation}
where $\theta>2$ is the energy parameter, and respectively,
\begin{equation}
    f(u)=\log(u)-\log(1-u)+\theta(1-2u).
\end{equation}
Define the continuous energy functional,
\begin{equation}
    E(u)=\int_{\Omega} \left(F(u)+\frac{\epsilon^2}{2}|\nabla u|^2\right)\text{d}x.
\end{equation}
We then obtain the following natural energy dissipation property \cite{allen1979microscopic}.
\begin{proposition}[Continuous energy stability]\label{prop:sta}
    The energy of a smooth solution $u$ to \eqref{eq:ac} is dissipated with time.
\end{proposition}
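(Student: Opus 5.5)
The plan is to differentiate $E(u(t))$ in time and use the equation \eqref{eq:ac} to identify the derivative as a negative square. Since no boundary condition is written in \eqref{eq:ac}, I will assume, as is standard for this model, that $\Omega$ is periodic or that the homogeneous Neumann condition $\partial_n u=0$ holds on $\partial\Omega$. I will also assume that the smooth solution satisfies $0<u<1$ on $\Omega\times[0,T]$. This keeps $F(u)$ and $f(u)$ finite and smooth along the trajectory, so differentiation under the integral is justified.

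First, by the chain rule and the relation $F'=f$,
\begin{equation*}
\frac{d}{dt}E(u)=\int_\Omega \left( f(u)\,u_t+\epsilon^2\nabla u\cdot\nabla u_t\right)\,\mathrm{d}x .
\end{equation*}
Second, I integrate the gradient term by parts. Under the periodic or Neumann assumption the boundary integral $\epsilon^2\int_{\partial\Omega} u_t\,\partial_n u\,\mathrm{d}s$ vanishes. This gives
\begin{equation*}
\frac{d}{dt}E(u)=\int_\Omega \left(f(u)-\epsilon^2\Delta u\right)u_t\,\mathrm{d}x .
\end{equation*}
Third, the equation gives $f(u)-\epsilon^2\Delta u=-u_t$. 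Substituting this yields
\begin{equation*}
\frac{d}{dt}E(u)=-\int_\Omega |u_t|^2\,\mathrm{d}x\le 0 .
\end{equation*}
Integrating in time then gives $E(u(t_2))\le E(u(t_1))$ for all $t_2\ge t_1$, which is the claimed dissipation.

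The only delicate step is justifying the computation near the logarithmic singularities of $F$ at $0$ and $1$. I will handle this by taking the strict bound $0<u<1$ as part of the meaning of a smooth solution, since otherwise $f(u)$ is undefined. If needed, this bound could alternatively be obtained from a maximum-principle argument: the singular term $\log u-\log(1-u)$ pushes $u$ away from $0$ and $1$. The boundary term is handled by the stated boundary-condition assumption.
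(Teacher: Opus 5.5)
Your proposal is correct and follows the same route as the paper: differentiate $E$ in time, integrate the gradient term by parts, and substitute the equation to get $-\int_\Omega |u_t|^2\,\mathrm{d}x\le 0$. You also make explicit two things the paper's one-line computation uses silently: the periodic (or Neumann) boundary condition that kills the boundary term, and the bound $0<u<1$.
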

\begin{proof}
    By definition, 
    \begin{equation}
        \begin{aligned}
        \frac{\partial E(u)}{\partial t}
        &=\int_{\Omega}(f(u) - \epsilon^2\Delta u)\frac{\partial u}{\partial t}\text{d}x
        &=-\int_{\Omega}\left|\frac{\partial u}{\partial t}\right|^2\text{d}x
        &\leq 0.
    \end{aligned}
    \end{equation}
\end{proof}
Since this equation plays a key role in phase field models and phase transition simulations, 
theoretical analysis and numerical studies concerning it are particularly important. 
An ideal numerical algorithm for the simulation of Allen-Cahn equation with Flory-Huggins potential should possess the following desirable properties: 
(a) bound preserving property: the logarithmic term confines the solution to $0<u<1$,  so the numerical scheme should preserve this bound; 
(b) discrete energy stability: corresponding to the continuous energy stability \cite{provatas2011phase}. 
In what follows, we review some recent works on numerical simulations of Allen-Cahn equation.

In \cites{guillen2013linear,yang2017numerical},
the so-called invariant energy quadratization (IEQ) method was proposed,
and subsequently, the scalar auxiliary variable (SAV) approach \cite{shen2018scalar} was developed as a significant variant.
These methods have been widely applied in numerous numerical simulation scenarios, such as molecular beam epitaxy (MBE) \cite{yang2017numerical} and crystal growth \cites{zhao2017numerical,zhao2018general}.
The core idea behind this class of methods is to introduce auxiliary variables (spatially dependent in IEQ, or purely scalar in SAV) to transform the original free energy functional into a quadratic form.
Despite their significant success in numerical simulations, these methods share two limitations:
(a) the scheme ensures stability only for the transformed discrete energy, but may not for the discrete energy of the original formulation
\cites{yang2017numerical,wang2020stabilized}, which may lead to non-physical numerical oscillations;
(b) the time step size required to satisfy the discrete maximum bound principle (MBP) \cites{chen1992generation,nishiura1987stability} is restricted by a uniform constant,
which introduces a trade-off between stability and computational efficiency.

Many numerical schemes achieve bound preserving properties for the numerical solution by satisfying 
the MBP \cites{tang2016implicit,du2021maximum,stehlik2015maximum,shen2016maximum}.
Reference \cite{du2021maximum} introduced an exponential time differencing (ETD) method for a class of semilinear parabolic equations 
including the nonlocal Allen-Cahn equation \cites{du2019maximum,kroemer2025quantitative}. 
The scheme is shown to preserve the MBP unconditionally and exhibit asymptotic compatibility. 
However, this analysis is restricted to the Allen-Cahn equation with the double-well potential, which is well-defined and continuous for all $u\in \mathbb{R}$.
Notably, for the Flory-Huggins potential, the algorithm faces limitations, 
because the validity of the discrete MBP requires the potential term to be bounded, 
a property that the Flory-Huggins potential does not satisfy. 
Furthermore, the MBP is only theoretically guaranteed, which means, 
in numerical practice, we need the solution to strictly satisfy the separation property, i.e., $\|u\|_{\infty}\leq 1-\delta$ for sufficiently small $\delta$.

Reference \cite{li2022stability} proposed an algorithm based on operator splitting methods \cite{macnamara2017operator}. 
It applies the Strang splitting method \cite{strang1968construction} for second-order temporal discretizations. 
The merits of the proposed algorithm include its simplicity, its preservation of the MBP and discrete energy dissipation, as well as its proven convergence. 
Nonetheless, the method exhibits a shortcoming: the MBP is contingent upon a time step restriction. 
Consequently, this imposes challenges for long-time simulations and for computations employing large time steps.

A recent work \cite{wang2020stabilized} introduced the stabilized energy factorization (SEF) approach. 
Its core idea is to factorize the energy functional into a product of several factors  \cite{kou2020novel}, 
with each factor handled individually based on its unique property. 
This approach offers multiple benefits: linear semi-implicit scheme, preservation of the discrete MBP and discrete energy stability, as well as convergence property. 
However, in the paper, the stability parameter condition required for the MBP is stringent and difficult to choose in practical computations \cite{wang2020stabilized}. 
Moreover, the algorithm struggles to fully overcome the numerical instability caused by the singularity at $u=0,1$.


The convex splitting scheme was initially proposed in the study of unconditionally energy stable schemes for the Cahn-Hilliard equation by Eyre \cite{eyre1998unconditionally}, 
and was subsequently successfully extended and applied to various phase field models \cites{chen2012linear,shen2012second,baskaran2013convergence}. 
The fundamental principle of the convex splitting approach is to express the energy functional as the sum of a convex and a concave contribution. 
Correspondingly, the convex part is handled implicitly and the concave part explicitly, 
leading to a semi-implicit scheme known as a convex splitting scheme, 
which possesses several desirable properties, such as unique solvability and discrete energy stability \cites{eyre1998unconditionally,du2020phase}. 

However, while extensive efforts have been devoted to designing such unconditionally energy-stable temporally discrete schemes, a crucial computational obstacle remains largely unresolved: efficiently solving the resulting highly nonlinear algebraic systems at each time step.
The lack of a robust, theoretically guaranteed iterative solver often compromises the unconditional stability claimed by these schemes in practical simulations.
Specifically, applying the convex splitting method to the Allen-Cahn equation with Flory-Huggins potential results in a deeply complex nonlinear discrete system.
The primary mathematical difficulty arises from the logarithmic singularities of the potential. As the phase variable $u$ approaches the physical bounds $0$ or $1$, the derivative of the potential becomes unbounded.
Consequently, the gradient of the corresponding objective functional lacks global Lipschitz continuity, making standard convergence rate analyses for iterative optimization algorithms inapplicable.

Li et al. \cite{li2025overcoming} developed an iterative method with the use of a variant of ADMM to solve the singular nonlinear systems 
arising from the application of the convex splitting scheme to the Cahn-Hilliard equation with Flory-Huggins potential.
The method possesses properties including unconditional convergence, bound preservation, and mass conservation. 
If the strict separation property is satisfied, i.e., there exists a uniform positive lower bound on the distance 
from the iterative sequence $\{u_2^{(k)}\}$ generated by the iterative solver to the two singular points at a point-wise level,
then the iterative algorithm exhibits a linear convergence rate. 
Nevertheless, the assumption is strong, artificially imposed, and not natural.

In this paper, adapting the strategy developed in \cite{li2025overcoming}, we present a novel iterative solver for the nonlinear discrete system at each time step.
This system naturally arises from the application of the convex splitting scheme to the Allen-Cahn equation with Flory-Huggins potential.
Compared with existing methods such as IEQ, ETD, operator splitting and SEF, 
our proposed new algorithm naturally satisfies the core requirements of bound preservation and discrete energy stability.
Moreover, its linear convergence requires no restrictions on the time step or the separation condition.
It effectively circumvents the numerical instability caused by the logarithmic singularity of the Flory-Huggins potential, 
thus overcoming the limitations of current methods, which often struggle to balance stability and computational efficiency and lack guaranteed convergence.

We reformulate the nonlinear system equivalently as a convex optimization problem for finding a minimizer.
To solve this optimization problem, we employ the alternating direction method of multipliers (ADMM) framework \cite{neal2011distributed}.
More specifically, the objective functional is decomposed into a linear part and a logarithmic part.
The subproblem corresponding to the linear part is solved efficiently with the fast Fourier transform (FFT) techniques \cite{cooley1965algorithm}, due to its circulant coefficient matrix.
For the logarithmic part, the corresponding subproblem is reduced to solving multiple one-dimensional equations with a monotonic derivative, which enables us to apply a safeguarded Newton's method to each equation \cites{dennis1996numerical,ortega2000iterative} in parallel.

The technical difference between the two works is as follows. 
The Cahn-Hilliard equation is a fourth-order equation, which makes the nonlinear system from the convex splitting scheme ill-conditioned.
In order to treat the Cahn-Hilliard equation as an $H^{-1}$ gradient flow, 
Li et al. \cite{li2025overcoming} introduced a variant of ADMM for solving a minimax problem. 
This variant is a new framework whose linear convergence result has not been extensively studied. 
In this paper, the Allen-Cahn equation is a second-order equation and is treated as an $L^{2}$ gradient flow. 
Therefore, the classical ADMM scheme for convex optimization problems can be directly adopted, 
and its various properties, including the linear convergence rate, have been well analyzed.

Our key theoretical contributions are twofold:
(a) We provide a rigorous proof of unconditional convergence for the ADMM solver for any multiplier update step size $\alpha \in (0,\frac{\sqrt{5}+1}{2})$.
(b) We establish the unconditional linear convergence of the solver. 
Unlike previous work on the Cahn-Hilliard equation that relied on a strict separation assumption to guarantee linear convergence \cite{li2025overcoming}, 
we show that the Allen-Cahn equation is inherently well-conditioned to achieve unconditional linear convergence under the proposed ADMM approach.

The remainder of this paper is organized as follows.
\Cref{section:discrete} introduces the discrete function spaces and operators within the cell-centered finite difference (CCFD) framework.
In \Cref{section:admm}, we detail the convex splitting scheme and formulate our novel ADMM-based iterative solver.
The core of our theoretical contribution is presented in \Cref{section:analysis}, where we rigorously establish the well-posedness, discrete energy stability, and the unconditional linear convergence of the proposed solver.
Comprehensive numerical experiments that validate our theoretical findings and illustrate the robustness of the solver are reported in \Cref{section:numerical}.
Finally, concluding remarks are given in \Cref{section:conclusion}.

\section{Discrete Function Spaces and Discrete Operators}
\label{section:discrete}
In this section, we introduce the discrete function spaces and discrete operators within the framework of CCFD method \cite{tryggvason2011direct}. 
The following notations are partially taken from \cites{chen2019positivity,li2025overcoming}.

We only present the two-dimensional case here, and the forms of the three-dimensional case are similar. 
The problem we consider in this paper is defined on a square domain $\Omega=(0,L)^2$ where $L>0$ is the width of domain, 
with periodic boundary conditions imposed. Under this configuration, the equations are cast into the following system form.
\begin{equation}
\begin{cases}
    \frac{\partial u}{\partial t}-\epsilon^2\Delta u+\log(u)-\log(1-u)+\theta(1-2u)=0,\  &(x,y)\in \Omega,\  t>0,\\
    u(x,y,0)=u_0(x,y),\ &(x,y)\in\Omega,\\
    u(x,y,t)=u(x+L,y,t)=u(x,y+L,t),\  &(x,y)\in \mathbb{R}^2,\  t>0.
\end{cases}
\end{equation}
For simplicity, we use a uniform division with the mesh size $h=\frac{L}{N}$, where $N\in \mathbb{N}_{+}$. 
In the CCFD method, we naturally define the uniform 1D grids for cell centers $C$ and cell edges $E$ as follows:
\begin{equation}
    C=\{p_i \mid i\in \mathbb{Z}\},
    \quad E=\{p_{i+\frac{1}{2}} \mid i\in \mathbb{Z}\},
\end{equation}
where $p_i=(i-\frac{1}{2})h$ and $p_{i+\frac{1}{2}}=ih$.
Then, we define four $2$D discrete $N^2$-periodic function spaces,
\begin{subequations}
    \begin{align}
    &\ch=\{\phi:C\times C\rightarrow \mathbb{R}\mid\phi_{i,j}=\phi_{i+\lambda N,j+\mu N},\forall i,j,\lambda, \mu\in\mathbb{Z}\},\\
    &\ehx=\{\phi:E\times C\rightarrow \mathbb{R}\mid\phi_{i+\frac{1}{2},j}=\phi_{i+\frac{1}{2}+\lambda N,j+\mu N},\forall i,j,\lambda, \mu\in\mathbb{Z}\},\\
    &\ehy=\{\phi:C\times E\rightarrow \mathbb{R}\mid\phi_{i,j+\frac{1}{2}}=\phi_{i+\lambda N,j+\frac{1}{2}+\mu N},\forall i,j,\lambda, \mu\in\mathbb{Z}\},\\
    &\eh=\ehx\times \ehy,
\end{align} 
\end{subequations}
where $\phi_{i,j}=\phi(p_i,p_j)$ is a grid function.

For $\phi \in \ch$, the discrete spatial difference operator in the $x$-direction is defined as
\begin{equation}
    \begin{aligned}
        D_x:\ch&\rightarrow\ehx,\\
        \phi&\mapsto(D_x \phi)_{i+\frac{1}{2},j}=\frac{1}{h}(\phi_{i+1,j}-\phi_{i,j}).
    \end{aligned}
\end{equation}
Similarly, for the $y$-direction
\begin{equation}
    \begin{aligned}
        D_y:\ch&\rightarrow\ehy,\\
        \phi&\mapsto(D_y \phi)_{i,j+\frac{1}{2}}=\frac{1}{h}(\phi_{i,j+1}-\phi_{i,j}).
    \end{aligned}
\end{equation}
The discrete gradient operator is defined as
\begin{equation}
    \begin{aligned}\nabla_h:\ch&\rightarrow\eh\\
    \phi&\mapsto(D_x\phi,D_y\phi).
    \end{aligned}
\end{equation}
For $\phi \in \ehx$, the discrete spatial difference operator in the $x$-direction is defined as
\begin{equation}
    \begin{aligned}
        d_x:\ehx&\rightarrow\ch,\\
        \phi&\mapsto(d_x\phi)_{i,j}=\frac{1}{h}(\phi_{i+\frac{1}{2},j}-\phi_{i-\frac{1}{2},j}).
    \end{aligned}
\end{equation}
Similarly, for $\phi \in \ehy$, the discrete spatial difference operator in the $y$-direction is defined as
\begin{equation}
    \begin{aligned}
        d_y:\ehy&\rightarrow\ch,\\
        \phi&\mapsto(d_y\phi)_{i,j}=\frac{1}{h}(\phi_{i,j+\frac{1}{2}}-\phi_{i,j-\frac{1}{2}}).
    \end{aligned}
\end{equation}
Thus for $\vec{f}=(f^x,f^y)\in\eh$, the discrete divergence operator can be written as\begin{equation}
    \begin{aligned}
        \nabla_h\cdot:\eh&\rightarrow\ch,\\
        \vec{f}&\mapsto\nabla_h\cdot \vec{f}=d_xf^x+d_yf^y.
    \end{aligned}
\end{equation}
Following from the discrete operators above, the 2D discrete Laplacian is naturally given by
\begin{equation}
    \begin{aligned}
        \Delta_h:\ch&\rightarrow\ch,\\
        \phi &\mapsto (\Delta_h \phi)_{i,j}\\
        &\quad =(\nabla_h\cdot \nabla_h \phi)_{i,j}\\
        &\quad =\frac{1}{h^2}(\phi_{i+1,j}+\phi_{i-1,j}+\phi_{i,j+1}+\phi_{i,j-1}-4\phi_{i,j}).
    \end{aligned}
\end{equation}

Over the domain $\Omega$ with periodic boundary conditions, we define the following grid inner products,
\begin{subequations}
    \begin{align}
        \langle \phi,\phi'\rangle_h&=h^2\sum_{i,j=1}^N \phi_{i,j}\phi'_{i,j},\qquad \phi,\phi'\in \ch,\\
        \langle \phi,\phi'\rangle_h&=h^2\sum_{i,j=1}^N \phi_{i+\frac{1}{2},j}\phi'_{i+\frac{1}{2},j},\qquad \phi,\phi'\in \ehx,\\
        \langle \phi,\phi'\rangle_h&=h^2\sum_{i,j=1}^N \phi_{i,j+\frac{1}{2}}\phi'_{i,j+\frac{1}{2}},\qquad \phi,\phi'\in \ehy,\\
        \langle \vec{f}_1,\vec{f}_2\rangle_h&=\langle f_1^x,f_2^x\rangle_h + \langle f_1^y,f_2^y\rangle_h,\qquad \vec{f_1},\vec{f_2}\in \eh.
    \end{align}
\end{subequations}
These discrete inner products induce, in the canonical way, the corresponding discrete norms.
\begin{subequations}
    \begin{align}
        \|\phi\|_h&=\sqrt{\langle \phi,\phi \rangle_h},\qquad \phi\in \ch,\\
        \|\vec{f}\|_h&=\sqrt{\langle \vec{f},\vec{f} \rangle_h},\qquad \vec{f}\in \eh.
    \end{align}
\end{subequations}

\begin{proposition}[Summation by parts formula]
\label{prop:sbp}
    For grid functions $\phi \in\ch,\vec{f}\in \eh$, the following summation by parts formula is valid:
    \begin{equation}\label{eq:sbp}
        \langle \phi,\nabla_h \cdot \vec{f}\rangle_h=
        -\langle \nabla_h\phi, \vec{f}\rangle_h.
    \end{equation}
\end{proposition}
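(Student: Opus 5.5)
The plan is to prove the formula one coordinate direction at a time, using only the periodicity of the grid functions. Since $\nabla_h\cdot\vec f = d_x f^x + d_y f^y$ and $\langle \nabla_h\phi,\vec f\rangle_h = \langle D_x\phi, f^x\rangle_h + \langle D_y\phi, f^y\rangle_h$, linearity of the inner products reduces \eqref{eq:sbp} to two one-directional identities,
\begin{equation*}
\langle \phi, d_x f^x\rangle_h = -\langle D_x\phi, f^x\rangle_h,\qquad \langle \phi, d_y f^y\rangle_h = -\langle D_y\phi, f^y\rangle_h .
\end{equation*}
The $y$-identity is the same as the $x$-identity with the roles of the indices $i$ and $j$ swapped, so only the $x$-direction needs to be written out.

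For the $x$-direction, fix $j$ and expand the definition of $d_x$:
\begin{equation*}
h^2\sum_{i=1}^N \phi_{i,j}\,\frac{1}{h}\bigl(f^x_{i+\frac12,j}-f^x_{i-\frac12,j}\bigr)
= h\sum_{i=1}^N \phi_{i,j} f^x_{i+\frac12,j} - h\sum_{i=1}^N \phi_{i,j} f^x_{i-\frac12,j}.
\end{equation*}
In the second sum, shift the index $i\mapsto i+1$. It becomes $h\sum_{i=0}^{N-1}\phi_{i+1,j}f^x_{i+\frac12,j}$. By $N$-periodicity of $\phi\in\ch$ and $f^x\in\ehx$, the $i=0$ term equals the $i=N$ term, so the range can be replaced by $i=1,\dots,N$. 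Combining the two sums gives
\begin{equation*}
-h\sum_{i=1}^N (\phi_{i+1,j}-\phi_{i,j})f^x_{i+\frac12,j} = -h^2\sum_{i=1}^N (D_x\phi)_{i+\frac12,j}f^x_{i+\frac12,j}.
\end{equation*}
Summing over $j=1,\dots,N$ yields $-\langle D_x\phi,f^x\rangle_h$, which is the $x$-identity. Adding it to the analogous $y$-identity gives \eqref{eq:sbp}.

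There is no real obstacle here. The only point requiring care is the index shift: one must check that periodicity makes the boundary terms cancel exactly, which is the discrete counterpart of the vanishing boundary integral in continuous integration by parts. One should also confirm that the inner products on $\ch$ and on $\ehx$ both carry the same weight $h^2$, so the factors of $h$ match on the two sides.
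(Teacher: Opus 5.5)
Your proposal is correct and follows essentially the same route as the paper: it expands $d_x$ and $d_y$ directly and then uses a periodic index shift to move the difference onto $\phi$. The only difference is that the paper handles both directions in one computation, while you split them and write out the $x$-direction alone.
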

\begin{proof}
    By direct computation, we have
\begin{equation}
    \begin{aligned}
        &\langle \phi,\nabla_h \cdot \vec{f}\rangle_h \\    
        &=h^2\sum_{i,j=1}^N  \phi_{i,j}\frac{1}{h}(f^x_{i+\frac{1}{2},j}-f^x_{i-\frac{1}{2},j}
        )+h^2\sum_{i,j=1}^N \phi_{i,j}\frac{1}{h}(f^y_{i,j+\frac{1}{2}}-f^y_{i,j-\frac{1}{2}}
        )\\
        &=h^2\sum_{i,j=1}^N \frac{1}{h}(\phi_{i,j}-\phi_{i+1,j})f^x_{i+\frac{1}{2},j}+h^2\sum_{i,j=1}^N \frac{1}{h}(\phi_{i,j}-\phi_{i,j+1})f^y_{i,j+\frac{1}{2}}\\
        &=-h^2\sum_{i,j=1}^N (D_x \phi)_{i+\frac{1}{2},j}
        f^x_{i+\frac{1}{2},j}
        -h^2\sum_{i,j=1}^N (D_y \phi)_{i,j+\frac{1}{2}}
        f^y_{i,j+\frac{1}{2}}\\
        &=-\langle \nabla_h \phi, \vec{f} \rangle_h.
    \end{aligned}
\end{equation}
    
\end{proof}
\begin{corollary}
    For grid functions $\phi,\psi\in\ch$, the following summation by parts formula is valid \cite{morton1994numerical}:
    \begin{equation}\label{eq:sbp2}
        \langle \phi,\Delta_h \psi\rangle_h=
        -\langle \nabla_h\phi, \nabla_h \psi \rangle_h.
    \end{equation}
\end{corollary}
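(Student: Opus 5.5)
The identity \eqref{eq:sbp2} should follow directly from \Cref{prop:sbp}, applied with a particular choice of the edge-centered vector field. Given $\psi\in\ch$, I set $\vec{f}=\nabla_h\psi=(D_x\psi,D_y\psi)$. This lies in $\eh$, since $D_x\psi\in\ehx$ and $D_y\psi\in\ehy$, and both inherit $N$-periodicity from $\psi$. By the definition of the discrete Laplacian, $\Delta_h\psi=\nabla_h\cdot\nabla_h\psi=\nabla_h\cdot\vec{f}$.

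Substituting this $\vec{f}$ into \eqref{eq:sbp} gives
\begin{equation*}
\langle \phi,\Delta_h\psi\rangle_h=\langle \phi,\nabla_h\cdot\vec{f}\rangle_h=-\langle \nabla_h\phi,\vec{f}\rangle_h=-\langle\nabla_h\phi,\nabla_h\psi\rangle_h,
\end{equation*}
which is exactly \eqref{eq:sbp2}. The only thing to check is bookkeeping. The composition $d_xD_x+d_yD_y$ must reproduce the five-point stencil stated in the definition of $\Delta_h$, and one can confirm this by expanding $(d_xD_x\psi)_{i,j}=\frac1{h^2}(\psi_{i+1,j}-2\psi_{i,j}+\psi_{i-1,j})$ and doing the same in $y$. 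The inner product on $\eh$ is also just the sum of the inner products on the $x$- and $y$-edge components, which is how the right-hand side is read.

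There is no real obstacle here; the periodicity that makes the index shift in \Cref{prop:sbp} valid has already been used there. As a side remark, taking $\phi=\psi$ gives $\langle\psi,-\Delta_h\psi\rangle_h=\|\nabla_h\psi\|_h^2\ge0$. This shows that $-\Delta_h$ is symmetric positive semidefinite, which is presumably the purpose of the corollary in the later convexity and ADMM analysis.
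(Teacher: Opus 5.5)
Your proposal is correct and is exactly the paper's argument: the paper's proof is the single step of taking $\vec{f}=\nabla_h\psi$ in \eqref{eq:sbp}. Your extra checks are correct bookkeeping that the paper leaves implicit, namely that $\nabla_h\psi\in\eh$ and that $d_xD_x+d_yD_y$ reproduces the five-point stencil.
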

\begin{proof}
    Take $\vec{f}=\nabla_h \psi$ in \eqref{eq:sbp}.
\end{proof}
\begin{corollary}
    The discrete operator $-\Delta_h$ is positive semi-definite on $\ch$ \cite{morton1994numerical}.
\end{corollary}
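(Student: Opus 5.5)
The plan is to apply the summation by parts identity \eqref{eq:sbp2} with $\phi=\psi$, so that the quadratic form of $-\Delta_h$ becomes the discrete Dirichlet energy. Positive semi-definiteness of $-\Delta_h$ on $\ch$ means two things. First, $-\Delta_h$ is symmetric with respect to $\langle\cdot,\cdot\rangle_h$. Second, $\langle \phi,-\Delta_h\phi\rangle_h\ge 0$ for every $\phi\in\ch$.

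For symmetry, I will apply \eqref{eq:sbp2} twice. It gives
\[
\langle \phi,\Delta_h\psi\rangle_h=-\langle\nabla_h\phi,\nabla_h\psi\rangle_h=-\langle\nabla_h\psi,\nabla_h\phi\rangle_h=\langle\psi,\Delta_h\phi\rangle_h .
\]
The middle equality uses only the symmetry of the inner product on $\eh$. Alternatively, one can read symmetry off the explicit five-point stencil, since the coefficient matrix is symmetric under the periodic indexing.

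For nonnegativity, I take $\psi=\phi$ in \eqref{eq:sbp2}. This gives
\[
\langle\phi,-\Delta_h\phi\rangle_h=\|\nabla_h\phi\|_h^2=h^2\sum_{i,j=1}^N\Big[(D_x\phi)_{i+\frac12,j}^2+(D_y\phi)_{i,j+\frac12}^2\Big]\ge 0 .
\]
As a remark, the form vanishes exactly when all forward differences vanish. By periodicity this happens exactly when $\phi$ is constant, so the kernel of $-\Delta_h$ is the set of constant grid functions. This shows the operator is only semi-definite and not definite.

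There is no real obstacle here. The only point needing care is that \eqref{eq:sbp2} relies on the periodic wraparound in \Cref{prop:sbp}, and that result is already established.
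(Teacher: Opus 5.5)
Your proof is correct and matches the paper's argument: take $\psi=\phi$ in \eqref{eq:sbp2} to get $\langle\phi,-\Delta_h\phi\rangle_h=\|\nabla_h\phi\|_h^2\ge 0$, with equality exactly for constant $\phi$. Your extra symmetry check, also done via \eqref{eq:sbp2}, is a detail the paper leaves implicit.
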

\begin{proof}
    Taking $\psi=\phi$ in \eqref{eq:sbp2}, we obtain that for any grid function $\phi \in \ch$,
    \begin{equation}
        \langle \phi,-\Delta_h \phi\rangle_h=
        \|\nabla_h \phi\|_h^2\geq0.
    \end{equation}
    Equality holds if and only if $\phi$ is constant on $\ch$.
\end{proof}
\section{The Iterative Solver Based on ADMM Framework}
\label{section:admm}
In this section, we present the methodology for solving the nonlinear system arising at each time step from the convex splitting scheme. 
We first detail the full derivation and algorithm of our proposed iterative solver based on ADMM framework. 


Let $T>0$ be the final computational time. We consider a uniform partition of the time interval $[0, T]$ with a time step size $\tau=\frac{T}{M}$ for some $M\in \mathbb{N}_+$, yielding discrete time levels $0=t_0<t_1<\cdots<t_M=T$. The numerical solution at time $t_n=n\tau$ is denoted by $u^n$.


The energy functional admits a decomposition into the sum of a convex and a concave part,
\begin{equation}
    \begin{aligned}
    E(u)&=\int_{\Omega}\left(\frac{\epsilon^2}{2}|\nabla u|^2+u\log(u)+(1-u)\log(1-u)\right)\text{d} x\text{d} y\\
    &\quad +\int_{\Omega}\theta (u-u^2)\text{d} x\text{d} y.
    \end{aligned}
\end{equation}

The temporal convex splitting scheme is constructed by treating the convex part implicitly and the concave part explicitly \cite{eyre1998unconditionally}.
Coupled with the spatial discretization introduced in \Cref{section:discrete}, the fully discrete scheme is stated as follows:

For a given $u^n\in \ch$, find $u^{n+1}\in \ch$ such that
\begin{equation}
\label{scheme:cs}
    \frac{u^{n+1}-u^n}{\tau}-\epsilon^2\Delta_hu^{n+1}
    +\log(u^{n+1})-\log(1-u^{n+1})+\theta(1-2u^n)=0,
\end{equation}
which can also be written as\begin{equation}
\label{eq:cs}
    \left(\frac{1}{\tau}-\epsilon^2\Delta_h\right)u^{n+1}+\log(u^{n+1})-\log(1-u^{n+1})=\frac{1}{\tau}u^{n}-\theta(1-2u^n).
\end{equation}

The remaining problem is how to solve this nonlinear system efficiently. In the following, we present our algorithmic approach.

Motivated by the need for robust solvers, our main strategy is to reformulate the numerical scheme \eqref{scheme:cs}
into a convex optimization problem \cite{boyd2004convex}, thereby enabling us to leverage powerful optimization tools.

For a given $u^n\in \ch$, we define a discrete energy functional
\begin{equation}
    \begin{aligned}
    J(u)&=\frac{1}{2\tau}\|u-u^n\|_h^2+\frac{\epsilon^2}{2}\|\nabla_hu\|_h^2\\
    &\quad +\langle u,\log(u)\rangle_h+
    \langle 1-u,\log(1-u)\rangle_h
    +\theta \langle u,1-2u^n \rangle_h.
    \end{aligned}
\end{equation}
Note that the presence of the logarithmic terms implicitly restricts the effective domain of $J(u)$ to the set $\{u \in \ch \mid 0 < u_{i,j} < 1, \forall i,j \}$.
The numerical scheme \eqref{scheme:cs} is then equivalent to the following convex optimization problem \cite{du2020phase}:
\begin{equation}\label{eq:opt}
    u^{n+1}=\arg \min_{u\in \ch}J(u).
\end{equation}
We then decompose the energy functional $J(u)$ into a sum of two convex terms, that is,
\begin{subequations}\label{eq:Ju}
    \begin{align}
        &J(u)=J_1(u)+J_2(u),\\ 
        &J_1(u)=\frac{1}{2\tau}\|u-u^n\|_h^2+\frac{\epsilon^2}{2}\|\nabla_hu\|_h^2,\\
        &J_2(u)=\langle u,\log(u)\rangle_h+
        \langle 1-u,\log(1-u)\rangle_h
        +\theta \langle u,1-2u^n \rangle_h,
    \end{align}
\end{subequations}
and now formalize the optimization problem by putting it into the following standard formulation, which reveals its structure clearly.
\begin{mini!}
{u_1, u_2}{J_1(u_1)+J_2(u_2),}{}{}
\addConstraint{u_1}{=u_2}{}
\addConstraint{u_i}{\in \ch,}{\quad i=1,2.}
\end{mini!}

Accordingly, the optimization problem is posed in terms of its Lagrange function, defined by
\begin{equation}\label{eq:Lagrange}
    \mathcal{L}(u_1,u_2;u_3)=J_1(u_1)+J_2(u_2)+
    \langle u_3,u_1-u_2 \rangle_h,
\end{equation}
where $u_3 \in \ch$ is the Lagrange multiplier \cite{rockafellar1973multiplier}, and the corresponding augmented Lagrange function is
\begin{equation}
    \mathcal{L}_{\rho}(u_1,u_2;u_3)=\mathcal{L}(u_1,u_2;u_3)+\frac{\rho}{2}\|u_1-u_2\|_h^2,
\end{equation}
where $\rho>0$ is the penalty parameter. We apply the standard ADMM framework \cite{neal2011distributed} to the optimization problem, whose $k$-th iteration step is given by
\begin{subequations}
    \begin{align}
        \label{eq:subp1}
        &u_2^{(k+1)}=\arg \min_{u_2\in \ch} \mathcal{L}_{\rho}(u_1^{(k)},u_2;u_3^{(k)}),\\
        \label{eq:subp2}
        &u_1^{(k+1)}=\arg \min_{u_1\in \ch} \mathcal{L}_{\rho}(u_1,u_2^{(k+1)};u_3^{(k)}),\\
        &u_3^{(k+1)}=u_3^{(k)}+\alpha\rho(u_1^{(k+1)}-u_2^{(k+1)}),
    \end{align}
\end{subequations}
where $\alpha\in(0,\frac{\sqrt{5}+1}{2})$ is the step size for multiplier update.

The efficiency of our ADMM implementation hinges on solving the two subproblems in \eqref{eq:subp1} and \eqref{eq:subp2}. 
We solve them by exploiting their distinct mathematical structures.

The first-order optimality condition for the $u_2$-update is given  by
\begin{equation}
    \label{eq:u2}
    \log(u_2^{(k+1)})-\log(1-u_2^{(k+1)})+
    \theta(1-2u^n)-u_3^{(k)}-\rho(u_1^{(k)}-u_2^{(k+1)})=0.
\end{equation}
Specifically, the optimality condition \eqref{eq:u2} reduces to solving a set of independent, one-dimensional nonlinear equations at each grid point $\{(i,j) \mid 1\leq i,j \leq N\}$,
completely decoupling the global problem into $N^2$ independent scalar equations.
This remarkable structure allows us to solve all $N^2$ equations perfectly in parallel, utilizing a safeguarded one-dimensional Newton's method for each equation \cites{dennis1996numerical,ortega2000iterative}.


The $u_1$-update results in a symmetric positive-definite linear system, i.e., 
    \begin{equation}
    \label{eq:u1}
        \left(\frac{1}{\tau}-\epsilon^2\Delta_h+\rho\right)u_1^{(k+1)}=\frac{1}{\tau}u^n+\rho u_2^{(k+1)}-u_3^{(k)},
    \end{equation}
which we solve with a preconditioned conjugate gradient (PCG) method \cite{concus1976generalized}. 
For uniform grids, this system is circulant and can be solved via discrete fast Fourier transform (FFT) techniques \cite{cooley1965algorithm}.

To summarize, we present the detailed process of the iterative solver based on ADMM framework in \Cref{alg:ADMM}.
\begin{algorithm}[!t]
    \caption{Iterative solver based on ADMM framework}
\label{alg:ADMM}
    \raggedright
    1. Define $u_1^{(0)}=u^n$, $u_2^{(0)}=u^n$, and
    $u_3^{(0)}=0$.\\
    2. Select $\rho\in(0,+\infty), \alpha \in (0,\frac{\sqrt{5}+1}{2})$.\\
    3. For $k\in\mathbb{N}$, update $u_2$ by solving the nonlinear system
    \begin{equation}
    \log(u_2^{(k+1)})-\log(1-u_2^{(k+1)})+
    \theta(1-2u^n)-u_3^{(k)}-\rho(u_1^{(k)}-u_2^{(k+1)})=0,
    \end{equation}
    \hspace{10pt} for $u_2^{(k+1)}\in\ch$ with one-dimensional Newton's method at each grid point.\\
    4. For $k\in\mathbb{N}$, update $u_1$ by solving the linear system
    \begin{equation}
    \label{u1}
        \left(\frac{1}{\tau}-\epsilon^2\Delta_h+\rho\right)u_1^{(k+1)}=\frac{1}{\tau}u^n+\rho u_2^{(k+1)}-u_3^{(k)},
    \end{equation}
    \hspace{10pt} for $u_1^{(k+1)}\in\ch$ with PCG or FFT.\\ 
    5. Update the Lagrange multiplier $u_3^{(k+1)}\in\ch$ as
    \begin{equation}\label{eq:u3}
    u_3^{(k+1)}=u_3^{(k)}+\alpha \rho(u_1^{(k+1)}-u_2^{(k+1)}).
    \end{equation}
\end{algorithm}

\begin{remark}
We explain why the one-dimensional Newton's method used in the $u_2$-update is safe and effective. 
Each subproblem in the $u_2$-update is equivalent to finding the unique root $x^*\in (0,1)$ of the function 
$h(x)=\log(x)-\log(1-x)+\rho x-m$ for some $m\in \mathbb{R}$. 
Note that $h(x)$ is strictly increasing on $(0,1)$, concave on $(0,\frac{1}{2}]$, and convex on $[\frac{1}{2},1)$. Furthermore, $\lim_{x\to 0^+}h(x)=-\infty$ and $\lim_{x\to 1^-}h(x)=+\infty$. 
If $h(\frac{1}{2})<0$, then upon selecting an initial guess $x^0$ such that $h(x^0)>0$,
the convexity of $h(x)$ on $[\frac{1}{2},1)$ rigorously guarantees that the sequence $\{x^k\}$
generated by Newton's method is strictly decreasing and unconditionally converges to the unique root $x^*\in(\frac{1}{2},x^0)$.
The case where $h(\frac{1}{2}) \ge 0$ is completely analogous.
Consequently, this simple initialization strategy naturally endows Newton's method with a robust safeguard. Furthermore, the condition $h'(x^*)>0$ ensures local quadratic convergence.


\end{remark}

\section{Theoretical Analysis}
\label{section:analysis}
In this section, we establish the following rigorous theoretical results for the proposed numerical method: 
(a) well-posedness and discrete energy stability to the convex splitting scheme;
(b) unconditional linear convergence and bound-preserving property for the ADMM solver.
\subsection{Well-posedness and discrete energy stability}
We first prove the existence and uniqueness of the discrete solution of the convex splitting scheme 
\eqref{scheme:cs} at each time step.
\begin{theorem}\label{thm:well-posedness}
    Assume that $0<u^n<1$ and $u^n \in \ch$. For any time step size $\tau>0$, there exists a unique $u^{n+1}\in \ch$ to solve \eqref{scheme:cs}.
\end{theorem}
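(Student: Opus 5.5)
We prove the statement through the variational reformulation \eqref{eq:opt}. On the open box $\mathcal{D}=\{u\in\ch \mid 0<u_{i,j}<1,\ \forall i,j\}$, a grid function $u$ solves \eqref{scheme:cs} exactly when $\nabla J(u)=0$, where the gradient is taken with respect to $\langle\cdot,\cdot\rangle_h$. To see this, differentiate $J$ termwise. The term $\frac{1}{2\tau}\|u-u^n\|_h^2$ gives $(u-u^n)/\tau$. The term $\frac{\epsilon^2}{2}\|\nabla_h u\|_h^2$ gives $-\epsilon^2\Delta_h u$, by the summation by parts formula \eqref{eq:sbp2}. The logarithmic terms give $\log u+1-\log(1-u)-1=\log u-\log(1-u)$, and the linear term gives $\theta(1-2u^n)$. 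Moreover, any solution of \eqref{scheme:cs} must lie in $\mathcal{D}$, since the logarithms must be defined. Existence and uniqueness of $u^{n+1}$ therefore reduce to showing that $J$ has exactly one critical point in $\mathcal{D}$.

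\emph{Uniqueness.} $J$ is strictly convex on $\mathcal{D}$. The quadratic part $J_1$ has Hessian $\frac{1}{\tau}I-\epsilon^2\Delta_h$, which is positive definite because $-\Delta_h$ is positive semi-definite (the corollary following \Cref{prop:sbp}). The pointwise function $s\mapsto s\log s+(1-s)\log(1-s)$ has second derivative $\frac{1}{s}+\frac{1}{1-s}>0$, so it is convex. The remaining term is linear in $u$. A strictly convex function on a convex set has at most one critical point, and any critical point is its minimizer. Alternatively, subtract the equations satisfied by two solutions $u,v$ and take the inner product with $u-v$. Using \eqref{eq:sbp2} and the monotonicity of $\log s-\log(1-s)$, this gives $\frac{1}{\tau}\|u-v\|_h^2\le 0$, hence $u=v$.

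\emph{Existence.} We show that $J$ attains its minimum at an interior point of $\mathcal{D}$. Extend $J$ continuously to the closed box $\overline{\mathcal{D}}=[0,1]^{N^2}$ using the convention $0\log 0=0$. Since $\overline{\mathcal{D}}$ is compact, a minimizer $u^*$ exists. The main obstacle is to rule out $u^*$ touching the boundary. The key fact is that the derivative of the entropy term blows up: $\frac{d}{ds}[s\log s+(1-s)\log(1-s)]=\log s-\log(1-s)\to-\infty$ as $s\to0^+$ and $\to+\infty$ as $s\to1^-$. All other contributions to $\nabla J$ are bounded on $\overline{\mathcal{D}}$.

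We make this precise as follows. Suppose $u^*_{i_0,j_0}=0$ at some grid point. Let $e$ be the grid function equal to $1$ at $(i_0,j_0)$ and $0$ elsewhere, and move to $u^*+\delta e$ for small $\delta>0$, which stays in $\overline{\mathcal{D}}$. The smooth part of $J$ changes by $O(\delta)$, with a constant depending only on $\tau,\epsilon,h,\theta$ and $u^n$. The entropy part at that node changes by $h^2\bigl[\delta\log\delta+(1-\delta)\log(1-\delta)\bigr]=h^2\delta\log\delta+O(\delta)$, and $\delta\log\delta$ is negative with magnitude much larger than $\delta$. Hence $J(u^*+\delta e)<J(u^*)$ for small $\delta$, which contradicts minimality. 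The case $u^*_{i_0,j_0}=1$ is symmetric, using the perturbation $-\delta e$.

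Therefore $u^*\in\mathcal{D}$. Since $J$ is smooth there, $\nabla J(u^*)=0$, so $u^*$ solves \eqref{scheme:cs}. Combined with uniqueness, $u^{n+1}=u^*$ is the unique solution, for every $\tau>0$. The hypothesis $0<u^n<1$ is not really needed for this argument. It only ensures that the data $u^n$ is bounded, which is what makes the $O(\delta)$ bounds hold.
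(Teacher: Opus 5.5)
Your proof is correct and follows the same route as the paper: the variational reformulation \eqref{eq:opt}, uniqueness from the strict (indeed $\tau^{-1}$-strong) convexity of $J$, and existence from compactness combined with the logarithmic blow-up at $u_{i,j}\in\{0,1\}$. The one difference is in how the boundary is excluded, and here your version actually closes a step the paper leaves implicit: the paper shows the $(i,j)$-th equation cannot hold near $0$ or $1$ and then applies the extreme value theorem on $[\delta,1-\delta]^{N^2}$, tacitly assuming that this constrained minimizer is a critical point, whereas your extension to $[0,1]^{N^2}$ together with the $h^2\delta\log\delta$ perturbation shows directly that the minimizer is interior (and you are right that $0<u^n<1$ is not needed here, since any $u^n\in\ch$ is automatically bounded and enters $J$ only through terms that are smooth in $u$).
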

\begin{proof}
    From \eqref{eq:opt}, the proof reduces to establishing the existence and uniqueness of a minimizer. 
    The second derivative of the objective function is given by
    \begin{equation}
        \nabla^2 J(u)=\frac{1}{\tau}I-\epsilon^2\Delta_h+G,
    \end{equation}
    where $G=\text{diag}\left \{\frac{1}{u_{i,j}^n(1-u_{i,j}^n)}\right \}_{i,j=1}^{N}$. 

    Here and hereafter, for a discrete functional $J:C_h\to\mathbb{R}$,
    $\nabla J(u)$ denotes the first derivative with respect to $u$, identified with
    its Riesz representative in $C_h$ through the discrete inner product
    $\langle\cdot,\cdot\rangle_h$. The second derivative $\nabla^2 J(u)$ is identified
    with the corresponding self-adjoint linear operator on $C_h$.

    Noticing the objective function $J(u)$ is $\frac{1}{\tau}$-strongly convex because of the property that $\nabla^2J(u)-\frac{1}{\tau}I$ is always positive-definite, 
    it suffices to show that a minimizer exists; uniqueness is then a direct consequence of strong convexity \cite{boyd2004convex}.
    
    Existence. If one component $u_{i,j}\to 0^+$ or $u_{i,j} \to 1^-$, the logarithmic term $\log(u_{i,j})-\log(1-u_{i.j})$ tends to $-\infty$ or $+\infty$ while other terms are all finite. 
    Therefore, the $\{i,j\}$-th equation in \eqref{scheme:cs} cannot hold under such a limit. 
    We thus conclude the existence of a positive constant $\delta>0$ such that $u_{i,j}\in[\delta,1-\delta]$ for all $1\leq i,j\leq N$. 
    A direct application of the extreme value theorem to the continuous function $J(u)$ on the compact constraint set $[\delta,1-\delta]^{N^2}$ 
    readily yields the existence of at least one minimizer \cites{1976Principles,weierstrass1894mathematische}.
\end{proof}

For $u\in \ch$, the discrete energy is defined by
\begin{equation}
\label{eq:discrete-energy}
    E_h(u)=\langle F(u),1 \rangle_h+\frac{\epsilon^2}{2}\|\nabla_h u\|_h^2,
\end{equation}
where $F(u)=u\log(u)+(1-u)\log(1-u)+\theta(u-u^2)$. Analogous to the monotonic decrease of the continuous energy, we prove a crucial discrete counterpart: 
the fully discrete energy obtained from the convex splitting solution is non-increasing, i.e., it exhibits unconditional discrete energy stability.
\begin{theorem}\label{thm:energy-stability} 
    Assume that $0< u^0<1$. For any time step size $\tau>0$, the discrete energy of convex splitting scheme \eqref{scheme:cs} is dissipated with time steps, i.e.
    \begin{equation}
        E_h(u^{n+1})\leq E_h(u^n).
    \end{equation}
\end{theorem}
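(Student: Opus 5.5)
We argue one time step at a time: from $u^n\in\ch$ with $0<u^n<1$ we show $E_h(u^{n+1})\le E_h(u^n)$, and then induct on $n$. The inductive hypothesis $0<u^n<1$ holds at $n=0$ by assumption. For later steps, \Cref{thm:well-posedness} gives a unique solution $u^{n+1}$ of \eqref{scheme:cs}, and its proof shows $u^{n+1}_{i,j}\in[\delta,1-\delta]$. So the bound $0<u^{n+1}<1$ propagates and every logarithm below is well defined.

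The main tool is the convex--concave splitting of $E_h$. Write $E_h=E_c-E_e$ with
\begin{equation*}
E_c(u)=\frac{\epsilon^2}{2}\|\nabla_h u\|_h^2+\langle u\log u+(1-u)\log(1-u),1\rangle_h,
\qquad
E_e(u)=-\theta\langle u-u^2,1\rangle_h .
\end{equation*}
Both $E_c$ and $E_e$ are convex. For $E_c$ this uses the corollary that $-\Delta_h$ is positive semi-definite together with $(s\log s+(1-s)\log(1-s))''=\frac{1}{s(1-s)}>0$. For $E_e$ it is because $\theta>0$ and $\theta(u^2-u)$ is convex.

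Using \eqref{eq:sbp2}, the discrete gradients are
\begin{equation*}
\nabla E_c(u)=-\epsilon^2\Delta_h u+\log u-\log(1-u),
\qquad
\nabla E_e(u)=-\theta(1-2u).
\end{equation*}
The scheme \eqref{scheme:cs} therefore reads
\begin{equation*}
\frac{u^{n+1}-u^n}{\tau}+\nabla E_c(u^{n+1})-\nabla E_e(u^n)=0 .
\end{equation*}

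Now apply the convexity inequalities in the appropriate direction. Convexity of $E_c$, expanded at $u^{n+1}$, gives
\begin{equation*}
E_c(u^n)\ge E_c(u^{n+1})+\langle \nabla E_c(u^{n+1}),u^n-u^{n+1}\rangle_h .
\end{equation*}
Convexity of $E_e$, expanded at $u^n$, gives
\begin{equation*}
E_e(u^{n+1})\ge E_e(u^n)+\langle \nabla E_e(u^n),u^{n+1}-u^n\rangle_h .
\end{equation*}
Subtracting the second from the first,
\begin{equation*}
E_h(u^{n+1})-E_h(u^n)\le \langle \nabla E_c(u^{n+1})-\nabla E_e(u^n),\,u^{n+1}-u^n\rangle_h .
\end{equation*}
Substituting the scheme, the right-hand side equals $-\frac{1}{\tau}\|u^{n+1}-u^n\|_h^2\le 0$, which is the claim.

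Since the quadratic part of $E_e$ is explicit, the concave inequality is actually an identity up to the term $\theta\|u^{n+1}-u^n\|_h^2$. Keeping it would give the sharper bound $-(\frac1\tau+\theta)\|u^{n+1}-u^n\|_h^2$, but this is not needed.

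The only delicate point is that the log-entropy part of $E_c$ is convex only on $(0,1)$, so the gradient inequality must be applied between two points strictly inside $(0,1)^{N^2}$. This is precisely what the positivity $0<u^{n+1}<1$ from \Cref{thm:well-posedness} supplies, and it is why the statement needs $0<u^0<1$ and the induction. Everything else is a routine computation.
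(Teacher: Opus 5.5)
Your proof is correct. It uses the same Eyre convex--concave skeleton as the paper, and the explicit part is handled identically: the gradient inequality for $\theta(u^2-u)$ expanded at $u^n$.

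The two proofs differ in how the implicit part is used. The paper relies only on the function-value comparison $J(u^{n+1})\le J(u^n)$, which comes from the minimization formulation \eqref{eq:opt}. You instead read \eqref{scheme:cs} as a first-order optimality condition and pair it with the gradient inequality for $E_c$ at $u^{n+1}$.

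Your route buys two things. First, it does not need the equivalence between the scheme and the minimization problem. Second, it gives the stronger estimate $E_h(u^{n+1})+\frac{1}{\tau}\|u^{n+1}-u^n\|_h^2\le E_h(u^n)$, or $\frac1\tau+\theta$ as you note, where the paper gets $\frac{1}{2\tau}$. The extra $\frac{1}{2\tau}\|u^{n+1}-u^n\|_h^2$ is exactly the strong-convexity gain of $J$ at its minimizer, which the bare comparison $J(u^{n+1})\le J(u^n)$ throws away.

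The paper's variational argument has its own advantage. It uses only function values, so it would still work if the implicit part were non-smooth, for example an obstacle-type potential. Your argument needs $\nabla E_c(u^{n+1})$ to exist, which is why you correctly insist on $0<u^{n+1}<1$.

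You also make explicit the induction that propagates $0<u^n<1$ from one step to the next. The paper leaves this implicit, even though it is needed for $J(u^n)$ and $E_h(u^n)$ to be finite.

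One wording slip: you do not literally subtract two same-direction inequalities. You combine $E_c(u^{n+1})-E_c(u^n)\le\cdots$ with $E_e(u^{n+1})-E_e(u^n)\ge\cdots$. The resulting bound is correct.
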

\begin{proof}
    The proof proceeds by adapting the convexity arguments established in \cite{eyre1998unconditionally}. 
    We decompose $F(u)$ into $F(u)=F_1(u)-F_2(u)$, where $F_1(u)=u\log(u)+(1-u)\log(1-u)$ and  $F_2(u)=\theta(u^2-u)$ are both convex. 
    From \eqref{eq:opt}, $J(u^{n+1})\leq J(u^n)$, and a simple fact that $\langle v,w \rangle_h = \langle vw,1 \rangle_h$ for $v,w \in \ch$, we have
    \begin{equation}
        \begin{aligned}
        &\frac{1}{2\tau}\|u^{n+1}-u^n\|_h^2+\left(\frac{\epsilon^2}{2}\|\nabla_hu^{n+1}\|_h^2+\langle F_1(u^{n+1}),1 \rangle_h\right)
        \\
        \leq& \langle u^{n+1}-u^n, F_2'(u^n)\rangle_h
        +\left(\frac{\epsilon^2}{2}\|\nabla_hu^{n}\|_h^2+\langle F_1(u^{n}),1 \rangle_h\right)
        ,
    \end{aligned}
    \end{equation}
the convexity of $F_2(u)$ implies\begin{equation}
    \langle u^{n+1}-u^n, F_2'(u^n)\rangle_h\leq 
    \langle F_2(u^{n+1})-F_2(u^n),1 \rangle_h.
\end{equation}
    After simplification, we arrive at
\begin{equation}
    E_h(u^{n+1})+\frac{1}{2\tau}\|u^{n+1}-u^n\|_h^2\leq E_h(u^n).
\end{equation}
\end{proof}

\subsection{Unconditional linear convergence}
We begin by establishing the unconditional convergence of the algorithm, after which we prove that it converges at a linear rate.

The following lemma states the existence and uniqueness of the stationary point of the Lagrange function $\mathcal{L}$. 
\begin{lemma}
    Assume that $0<u^n<1$ and $u^n \in \ch$. For any time step size $\tau>0$, there exists a unique stationary point $(u_1^*,u_2^*,u_3^*)\in \ch \times \ch \times \ch$ of the Lagrange function $\mathcal{L}$ defined in \eqref{eq:Lagrange}.
\end{lemma}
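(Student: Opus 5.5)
The plan is to write down the stationarity system of $\mathcal{L}$ from \eqref{eq:Lagrange} and reduce it to the scheme \eqref{scheme:cs}, whose solution exists and is unique by \Cref{thm:well-posedness}. A stationary point is a triple with $0<u_2<1$ pointwise, since this is the effective domain of $J_2$. Setting the derivatives with respect to $u_1$, $u_2$, $u_3$ to zero gives
\begin{equation*}
\begin{aligned}
&\nabla J_1(u_1)+u_3=\frac{1}{\tau}(u_1-u^n)-\epsilon^2\Delta_h u_1+u_3=0,\\
&\nabla J_2(u_2)-u_3=\log(u_2)-\log(1-u_2)+\theta(1-2u^n)-u_3=0,\\
&u_1-u_2=0.
\end{aligned}
\end{equation*}
Here the first-order derivatives are computed with respect to $\langle\cdot,\cdot\rangle_h$, using \eqref{eq:sbp2} to identify the gradient of $\frac{\epsilon^2}{2}\|\nabla_h u\|_h^2$ with $-\epsilon^2\Delta_h u$.

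For existence, take $u^{n+1}$ from \Cref{thm:well-posedness}, which satisfies $0<u^{n+1}<1$. Set $u_1^*=u_2^*=u^{n+1}$ and define
\[
u_3^*=\log(u^{n+1})-\log(1-u^{n+1})+\theta(1-2u^n).
\]
With these choices the second and third equations hold by construction. Adding the first two equations and using $u_1=u_2$ recovers exactly \eqref{scheme:cs}, so the first equation also holds, because $u^{n+1}$ solves the scheme.

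For uniqueness, take any stationary point $(u_1,u_2,u_3)$. The third equation forces $u_1=u_2=:v$ with $0<v<1$. Adding the first two equations eliminates $u_3$ and shows that $v$ solves \eqref{scheme:cs}, so $v=u^{n+1}$ by \Cref{thm:well-posedness}. The second equation then determines $u_3$ uniquely, and it equals $u_3^*$.

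No step here is a real obstacle. The only point needing care is to make explicit that a stationary point must lie in the open box $(0,1)^{N^2}$ where $\mathcal{L}$ is differentiable, so that the logarithmic terms in the stationarity conditions are well defined.
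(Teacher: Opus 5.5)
Your proposal is correct and follows essentially the same route as the paper. Both proofs write down the KKT system, eliminate $u_3$ using $u_1=u_2$ to recover \eqref{scheme:cs}, invoke Theorem~\ref{thm:well-posedness} to get $u_1^*=u_2^*=u^{n+1}$, and then read off $u_3^*$ from the $u_2$-equation. Your version separates existence from uniqueness explicitly and notes that a stationary point must lie in the open box $(0,1)^{N^2}$; the paper leaves both points implicit.
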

\begin{proof}
    The stationary points are found by setting the partial derivatives of the function with respect to $u_1,u_2,u_3$ to zero, 
    which gives the following KKT system,
    \begin{subequations}\label{eq:stationary}
        \begin{align}
        &\frac{\partial \mathcal{L}}{\partial u_1}(u_1^*,u_2^*;u_3^*)
        =\frac{1}{\tau}(u_1^*-u^n)-\epsilon^2\Delta_hu_1^*+u_3^*=0,\\
        &\frac{\partial \mathcal{L}}{\partial u_2}(u_1^*,u_2^*;u_3^*)
        =\log(u_2^*)-\log(1-u_2^*)+\theta(1-2u^n)-u_3^*=0,\\
        &\frac{\partial \mathcal{L}}{\partial u_3}
        (u_1^*,u_2^*;u_3^*)
        =u_1^*-u_2^*=0.
        \end{align}
    \end{subequations}
    The above KKT system reduces to
    \begin{equation}
        \frac{u_1^*-u^n}{\tau}-\epsilon^2\Delta_hu_1^*
    +\log(u_1^*)-\log(1-u_1^*)+\theta(1-2u^n)=0,
    \end{equation}
    which is exactly obtained by substituting $u_1^*$ for $u^{n+1}$ in \eqref{scheme:cs}. By \ref{thm:well-posedness}, we arrive at $u_1^*=u^{n+1}$, uniquely. 
    Define $(u_1^*,u_2^*,u_3^*)\in \ch \times \ch \times \ch$ as
    \begin{subequations}
        \begin{align}
            &u_1^*=u^{n+1},\\
            &u_2^*=u^{n+1},\\
            &\begin{aligned}
                u_3^*&=-\frac{1}{\tau}(u_1^{n+1}-u^n)+\epsilon^2\Delta_hu_1^{n+1}\\
                &=\log(u_2^{n+1})-\log(1-u_2^{n+1})+\theta(1-2u^n).
            \end{aligned}
        \end{align}
    \end{subequations}
    Then $(u_1^*,u_2^*,u_3^*)$ satisfies the system \eqref{eq:stationary}, which means it is the unique stationary point of the Lagrange function $\mathcal{L}$.
\end{proof}
To prove the convergence of \Cref{alg:ADMM}, the sequence generated by the iterative solver is denoted by $(u_1^{(k)},u_2^{(k)},u_3^{(k)})$, and we adopt the notation $(u_1^*,u_2^*,u_3^*)$ established in the previous lemma. 
We define the error sequence
\begin{equation}
    (e_1^{(k)},e_2^{(k)},e_3^{(k)})=(u_1^{(k)}-u_1^*,u_2^{(k)}-u_2^*,u_3^{(k)}-u_3^*),\qquad k\in \mathbb{N},
\end{equation}
and the sequence $\{\Phi^{(k)}\}$ as
\begin{equation}\label{def:Phi}
    \Phi^{(k)}=\frac{1}{\alpha\rho}\|e_3^{(k)}\|_h^2+\rho\|e_1^{(k)}\|_h^2+
    \eta\rho\|e_1^{(k)}-e_2 ^{(k)}\|_h^2,\qquad k\in \mathbb{N},
\end{equation}
where $\eta=\max(1-\alpha,1-\alpha^{-1})$. The techniques of convergence analysis are adapted from \cites{neal2011distributed,chen2017note,li2025overcoming}. 
We introduce the following useful inequality.
\begin{lemma}\label{lemma:Phi}
With the notations above, the following inequality holds,
\begin{equation}
\begin{aligned}
    &\Phi^{(k)}-\Phi^{(k+1)}\\
    &\geq \alpha c(\alpha)\rho\|e_1^{(k+1)}-e_1^{(k)}\|_h^2
    +c(\alpha)\rho\|e_1^{(k+1)}-e_2^{(k+1)}\|_h^2,
\end{aligned}
\end{equation}
for all $k \in \mathbb{N}$, where $c(\alpha)=\min(1,1+\alpha^{-1}-\alpha)=\begin{cases}
    1,&\alpha \in (0,1]\\
    1+\alpha^{-1}-\alpha,&\alpha \in (1,+\infty)
\end{cases}$.
\end{lemma}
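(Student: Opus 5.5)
The plan is to follow the classical Lyapunov-type argument for ADMM with relaxed multiplier step (Glowinski's $\alpha\in(0,\frac{\sqrt5+1}{2})$), as in \cites{chen2017note,li2025overcoming}, adapted to our update order ($u_2$ first, then $u_1$). Throughout, write $r^{(k)}=e_1^{(k)}-e_2^{(k)}=u_1^{(k)}-u_2^{(k)}$, so that the multiplier update reads $e_3^{(k+1)}=e_3^{(k)}+\alpha\rho r^{(k+1)}$.

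\textbf{Step 1: optimality relations in error form.} From \eqref{eq:u2} and \eqref{eq:u1} we have
\[
\nabla J_2(u_2^{(k+1)})=u_3^{(k)}+\rho(u_1^{(k)}-u_2^{(k+1)}),\qquad
\nabla J_1(u_1^{(k+1)})=-u_3^{(k)}-\rho r^{(k+1)}.
\]
The stationary system \eqref{eq:stationary} gives $\nabla J_2(u_2^*)=u_3^*$ and $\nabla J_1(u_1^*)=-u_3^*$. Subtracting, and using monotonicity of the gradients of the convex functions $J_1,J_2$, we obtain
\[
\langle -e_3^{(k)}-\rho r^{(k+1)},e_1^{(k+1)}\rangle_h\ge0,\qquad
\langle e_3^{(k)}+\rho(e_1^{(k)}-e_2^{(k+1)}),e_2^{(k+1)}\rangle_h\ge0 .
\]
Adding these, substituting $e_3^{(k)}=e_3^{(k+1)}-\alpha\rho r^{(k+1)}$, and writing $e_2^{(k+1)}=e_1^{(k+1)}-r^{(k+1)}$, $e_1^{(k)}-e_2^{(k+1)}=r^{(k+1)}-(e_1^{(k+1)}-e_1^{(k)})$, gives a single inequality. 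It involves $\langle e_3^{(k)},r^{(k+1)}\rangle_h$, $\|r^{(k+1)}\|_h^2$, $\langle e_1^{(k+1)}-e_1^{(k)},e_1^{(k+1)}\rangle_h$ and the cross term $\langle r^{(k+1)},e_1^{(k+1)}-e_1^{(k)}\rangle_h$. Polarization identities, namely
\[
2\langle e_3^{(k)},r^{(k+1)}\rangle_h=\tfrac{1}{\alpha\rho}\bigl(\|e_3^{(k+1)}\|_h^2-\|e_3^{(k)}\|_h^2\bigr)-\alpha\rho\|r^{(k+1)}\|_h^2
\]
and its analogue for $e_1$, then turn this into
\[
\tfrac1{\alpha\rho}\|e_3^{(k)}\|_h^2+\rho\|e_1^{(k)}\|_h^2-\bigl(\text{same at }k+1\bigr)\ge(2-\alpha)\rho\|r^{(k+1)}\|_h^2+\rho\|e_1^{(k+1)}-e_1^{(k)}\|_h^2-2\rho\langle r^{(k+1)},e_1^{(k+1)}-e_1^{(k)}\rangle_h .
\]

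\textbf{Step 2: controlling the cross term.} This is the main obstacle, and it is where $\eta$ and the range of $\alpha$ enter. Apply monotonicity of $\nabla J_1$ between two consecutive iterates instead of against $u_1^*$. From the $u_1$-relation at steps $k$ and $k+1$ together with $u_3^{(k)}-u_3^{(k-1)}=\alpha\rho r^{(k)}$,
\[
\nabla J_1(u_1^{(k+1)})-\nabla J_1(u_1^{(k)})=\rho\bigl((1-\alpha)r^{(k)}-r^{(k+1)}\bigr),
\]
so
\[
\langle r^{(k+1)},e_1^{(k+1)}-e_1^{(k)}\rangle_h\le(1-\alpha)\langle r^{(k)},e_1^{(k+1)}-e_1^{(k)}\rangle_h .
\]
Next bound the right-hand side by Young's inequality, $|1-\alpha|\bigl(\tfrac12\|r^{(k)}\|_h^2+\tfrac12\|e_1^{(k+1)}-e_1^{(k)}\|_h^2\bigr)$, and for $\alpha>1$ split with weights tuned so that the $\|r\|^2$ terms telescope. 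This is exactly what the $\eta\rho\|e_1^{(k)}-e_2^{(k)}\|_h^2$ term in $\Phi^{(k)}$ is designed to absorb.

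\textbf{Step 3: case analysis on $\alpha$.} For $\alpha\in(0,1]$ ($\eta=1-\alpha$), moving $(1-\alpha)\rho\|r^{(k)}\|^2-(1-\alpha)\rho\|r^{(k+1)}\|^2$ into $\Phi$ should leave coefficients $\rho\|r^{(k+1)}\|^2$ and $\alpha\rho\|e_1^{(k+1)}-e_1^{(k)}\|^2$, i.e.\ $c(\alpha)=1$. For $\alpha>1$ ($\eta=1-\alpha^{-1}$), I would use the weighted Young inequality $2|1-\alpha|\,|\langle a,b\rangle|\le(\alpha-1)(\alpha^{-1}\|a\|^2+\alpha\|b\|^2)$ and expect the remaining coefficients to be $(1+\alpha^{-1}-\alpha)\rho$ and $\alpha(1+\alpha^{-1}-\alpha)\rho$. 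These are positive precisely when $\alpha<\frac{\sqrt5+1}{2}$. I would verify the bookkeeping by matching coefficients rather than guessing the splitting.

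Finally, the step-$k$ relation for $\nabla J_1(u_1^{(k)})$ is unavailable at $k=0$, because $u_1^{(0)}=u^n$ is not produced by a $u_1$-update. I would handle this by introducing a fictitious $u_3^{(-1)}$, defined so that $\nabla J_1(u_1^{(0)})=-u_3^{(-1)}-\rho r^{(0)}$ with $r^{(0)}=0$ and $u_3^{(0)}-u_3^{(-1)}=\alpha\rho r^{(0)}$. Since $J_1$ is a quadratic with $\nabla J_1(u^n)=0$, this reduces to checking that the choice $u_3^{(-1)}=u_3^{(0)}=0$ is consistent with the initialization in \Cref{alg:ADMM}.
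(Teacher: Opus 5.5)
Your Steps 1--3 follow the paper's proof essentially line by line. You use the same two monotonicity inequalities against the stationary point (\eqref{eq:inequality1}--\eqref{eq:inequality2}), the same consecutive-iterate bound \eqref{eq:inequality4}, and the same Young weights, with the $\eta\rho\|e_1^{(k)}-e_2^{(k)}\|_h^2$ term telescoping. The coefficients do come out as $c(\alpha)\rho$ and $\alpha c(\alpha)\rho$.

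The genuine error is your treatment of $k=0$. The claim $\nabla J_1(u^n)=0$ is false. Since $\nabla J_1(u)=\tau^{-1}(u-u^n)-\epsilon^2\Delta_h u$, we have $\nabla J_1(u^n)=-\epsilon^2\Delta_h u^n$, which vanishes only when $u^n$ is constant. Because $r^{(0)}=0$ forces $u_3^{(-1)}=u_3^{(0)}=0$, your fictitious step would require $\nabla J_1(u_1^{(0)})=0$, so it is inconsistent with \Cref{alg:ADMM}. Concretely, at $k=0$
\[
\nabla J_1(u_1^{(1)})-\nabla J_1(u_1^{(0)})=-\rho r^{(1)}+\epsilon^2\Delta_h u^n ,
\]
and monotonicity then only gives $\rho\langle r^{(1)},e_1^{(1)}-e_1^{(0)}\rangle_h\le\epsilon^2\langle\Delta_h u^n,u_1^{(1)}-u^n\rangle_h$. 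The right-hand side has no definite sign, so the cross term $-2\rho\langle r^{(1)},e_1^{(1)}-e_1^{(0)}\rangle_h$ is not controlled.

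The paper's proof has the same hole without comment. It uses the $u_1$-error equation \eqref{eq:e1} written for $e_1^{(k)}$, which involves $e_3^{(k-1)}$ and is not available when $k=0$. So \eqref{eq:inequality4} is really established only for $k\ge1$. You can repair this in either of two ways:
\begin{itemize}
\item Restrict the lemma to $k\ge1$. This is all \Cref{thm:convergence} uses, namely monotone decrease of $\Phi^{(k)}$ from $k=1$ on.
\item Initialize $u_3^{(0)}=\epsilon^2\Delta_h u^n=-\nabla J_1(u_1^{(0)})$. With this choice, setting $u_3^{(-1)}:=u_3^{(0)}$ makes your device legitimate, and the inequality holds at $k=0$ as well.
\end{itemize}
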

\begin{proof}
From \eqref{def:Phi}, we have
\begin{equation}
    \begin{aligned}
    &\Phi^{(k)}-\Phi^{(k+1)}\\
    &=\frac{1}{\alpha \rho}\left(\|e_3^{(k)}\|_h^2-\|e_3^{(k+1)}\|_h^2\right)+\rho \left(\|e_1^{(k)}\|_h^2-\|e_1^{(k+1)}\|_h^2\right)\\
    &\quad +\eta \rho \left(\|e_1^{(k)}-
    e_2^{(k)}\|_h^2-\|e_1^{(k+1)}-
    e_2^{(k+1)}\|_h^2\right)\\
    &=-2\langle e_3^{(k)},e_1^{(k+1)}-e_2^{(k+1)}\rangle_h-\alpha \rho \|e_1^{(k+1)}-e_2^{(k+1)}\|_h^2\\
    &\quad +\rho \left(\|e_1^{(k+1)}-e_1^{(k)}\|_h^2
    +2\langle e_1^{(k)}-e_1^{(k+1)},e_1^{(k+1)}\rangle_h\right)\\
    &\quad +\eta \rho \left(\|e_1^{(k)}-
    e_2^{(k)}\|_h^2-\|e_1^{(k+1)}-
    e_2^{(k+1)}\|_h^2\right).
\end{aligned}
\end{equation}
From \eqref{eq:u1}, \eqref{eq:u2}, \eqref{eq:u3}, and the KKT system \eqref{eq:stationary}, we get the error equations,
\begin{subequations}
    \begin{align}
        &g(u_2^{(k+1)})-g(u_2^*)-\rho(e_1^{(k)}-e_2^{(k+1)})-e_3^{(k)}=0,
        \label{eq:e2}
        \\
        &(\tau^{-1}-\epsilon^2 \Delta_h)e_1^{(k+1)}
        +\rho(e_1^{(k+1)}-e_2^{(k+1)})+e_3^{(k)}=0,
        \label{eq:e1}
        \\
        &e_3^{(k+1)}=e_3^{(k)}+\alpha \rho(e_1^{(k+1)}-e_2^{(k+1)}),
        \label{eq:e3}
    \end{align}
\end{subequations}
where $g(u)=\log(u)-\log(1-u)$. Equation \eqref{eq:e1} implies
\begin{equation}
    \langle (\tau^{-1}-\epsilon^2\Delta_h)e_1^{(k+1)}
        +\rho(e_1^{(k+1)}-e_2^{(k+1)})+e_3^{(k)}, e_1^{(k+1)} \rangle_h=0.
\end{equation}
The positive definiteness of the operator $(\tau^{-1}-\epsilon^2\Delta_h)$ leads to
\begin{equation}\label{eq:inequality1}
    \langle e_3^{(k)},e_1^{(k+1)}\rangle_h 
    \leq
    -\rho \langle e_1^{(k+1)}-e_2^{(k+1)},e_1^{(k+1)}\rangle_h.
\end{equation}
\eqref{eq:e2} implies
\begin{equation}
    \langle g(u_2^{(k+1)})-g(u_2^*)-\rho(e_1^{(k)}-e_2^{(k+1)})-e_3^{(k)},u_2^{(k+1)}-u_2^* \rangle_h=0.
\end{equation}
The monotonicity property of $g(u)$ yields
\begin{equation}
    \langle g(u_2^{(k+1)})-g(u_2^*),e_2^{(k+1)} \rangle_h=
    \langle \rho(e_1^{(k)}-e_2^{(k+1)})+e_3^{(k)},e_2^{(k+1)} \rangle_h\geq 0,
\end{equation}
which reduces to 
\begin{equation}\label{eq:inequality2}
    \langle e_3^{(k)},e_2^{(k+1)} \rangle _h\geq -\rho 
    \langle e_1^{(k)}-e_2^{(k+1)},e_2^{(k+1)} \rangle_h.
\end{equation}
It follows from \eqref{eq:inequality1} and \eqref{eq:inequality2} that
\begin{equation}
    -2\langle e_3^{(k)},e_1^{(k+1)}-e_2^{(k+1)}\rangle_h\geq 2\rho \langle e_1^{(k+1)}-e_2^{(k+1)},e_1^{(k+1)}\rangle_h - 2\rho
    \langle e_1^{(k)}-e_2^{(k+1)},e_2^{(k+1)} \rangle_h,
\end{equation}
which leads to
\begin{equation}
\begin{aligned}
&\Phi^{(k)}-\Phi^{(k+1)}\\
&\geq 2\rho \langle e_1^{(k+1)}-e_2^{(k+1)},e_1^{(k+1)}\rangle_h - 2\rho
    \langle e_1^{(k)}-e_2^{(k+1)},e_2^{(k+1)} \rangle_h \\
    &\quad -\alpha \rho \|e_1^{(k+1)}-e_2^{(k+1)}\|_h^2\\
    &\quad +\rho \left(\|e_1^{(k+1)}-e_1^{(k)}\|_h^2
    +2\langle e_1^{(k)}-e_1^{(k+1)},e_1^{(k+1)}\rangle_h \right)\\
    &\quad +\eta \rho \left(\|e_1^{(k)}-
    e_2^{(k)}\|_h^2-\|e_1^{(k+1)}-
    e_2^{(k+1)}\|_h^2 \right)\\
&=(2-\alpha-\eta)\rho \|e_1^{(k+1)}-e_2^{(k+1)}\|_h^2 \\
&\quad +2\rho \langle e_1^{(k+1)}-e_2^{(k+1)},e_1^{(k)}-e_1^{(k+1)} \rangle_h\\
&\quad +\rho \|e_1^{(k+1)}-e_1^{(k)}\|_h^2+\eta \rho \|e_1^{(k)}-e_2^{(k)}\|_h^2.
\end{aligned}
\end{equation}
Taking $k$-th and $k+1$-th in \eqref{eq:e1}, we obtain that
\begin{equation}
    \begin{aligned}
    &(\tau^{-1}-\epsilon^2 \Delta_h)(e_1^{(k+1)}-e_1^{(k)})+\rho (e_1^{(k+1)}-e_2^{(k+1)})\\
    & -\rho(e_1^{(k)}-e_2^{(k)})+(e_3^{(k)}-e_3^{(k-1)})=0.
    \end{aligned}
\end{equation}
We take the inner product with $(e_1^{(k+1)}-e_1^{(k)})$. Then, by exploiting the positive-definiteness of $(\tau^{-1}-\epsilon^2 \Delta_h)$ 
and the definition of $e_3^{(k)}$, we infer the following useful inequality that
\begin{equation}\label{eq:inequality4}
    \langle e_1^{(k+1)}-e_2^{(k+1)},e_1^{(k+1)}-e_1^{(k)}\rangle_h
    \leq(1-\alpha) \langle e_1^{(k)}-e_2^{(k)},
    e_1^{(k+1)}-e_1^{(k)}\rangle_h,
\end{equation}
which implies
\begin{equation}
\begin{aligned}
    &\Phi^{(k)}-\Phi^{(k+1)}\\
    &\geq (2-\alpha-\eta)\rho \|e_1^{(k+1)}-e_2^{(k+1)}\|_h^2\\
    &\quad -2(1-\alpha)\rho \langle e_1^{(k)}-e_2^{(k)}, e_1^{(k+1)}-e_1^{(k)}\rangle_h\\
    &\quad +\rho \|e_1^{(k+1)}-e_1^{(k)}\|_h^2+\eta \rho \|e_1^{(k)}-e_2^{(k)}\|_h^2.
\end{aligned}
\end{equation}

If $\alpha \in (0,1]$, $\eta=1-\alpha$. From the inequality $2ab\leq a^2+b^2$, we derive that
\begin{equation}\label{eq:a<1}
    \begin{aligned}
    &\Phi^{(k)}-\Phi^{(k+1)}\\
    &\geq (2-\alpha-(1-\alpha))\rho \|e_1^{(k+1)}-e_2^{(k+1)}\|_h^2\\
    &\quad -(1-\alpha)\rho (\| e_1^{(k)}-e_2^{(k)}\|_h^2+
    \|e_1^{(k+1)}-e_1^{(k)}\|_h^2)\\
    &\quad +\rho \|e_1^{(k+1)}-e_1^{(k)}\|_h^2+(1-\alpha) \rho \|e_1^{(k)}-e_2^{(k)}\|_h^2\\
    &=\alpha \rho \|e_1^{(k+1)}-e_1^{(k)}\|_h^2+\rho \|e_1^{(k+1)}-e_2^{(k+1)}\|_h^2.
\end{aligned}
\end{equation}
If $\alpha \in (1,+\infty)$, $\eta=1-\alpha^{-1}$. It follows from the inequality $2ab\leq \alpha^{-1}a^2+\alpha b^2$ that
\begin{equation}\label{eq:a>1}
\begin{aligned}
&\Phi^{(k)}-\Phi^{(k+1)}\\
    &\geq (2-\alpha-(1-\alpha^{-1}))\rho \|e_1^{(k+1)}-
    e_2^{(k+1)}\|_h^2\\
    &\quad +(\alpha-1)\rho (-\alpha^{-1}\| e_1^{(k)}-e_2^{(k)}\|_h^2-
    \alpha \|e_1^{(k+1)}-e_1^{(k)}\|_h^2)\\
    &\quad +\rho \|e_1^{(k+1)}-e_1^{(k)}\|_h^2
    +(1-\alpha^{-1}) \rho \|e_1^{(k)}-
    e_2^{(k)}\|_h^2\\
    &=(1+\alpha-\alpha^2)\rho \|e_1^{(k+1)}-e_1^{(k)}\|_h^2\\
    &\quad +(1-\alpha+\alpha^{-1})\rho \|e_1^{(k+1)}- e_2^{(k+1)}\|_h^2.
\end{aligned}
\end{equation}
The proof is completed by the above two inequalities \eqref{eq:a<1} and \eqref{eq:a>1}.
\end{proof}
Armed with the Lemma \ref{lemma:Phi}, we proceed to prove unconditional convergence.
\begin{theorem}\label{thm:convergence}
    Assume that the step size $\alpha \in (0,\frac{\sqrt{5}+1}{2})$ for multiplier update. Then the iterative solver \ref{alg:ADMM} converges,
    \begin{equation}
        \lim_{k\to +\infty} u_1^{(k)}=
        \lim_{k\to +\infty} u_2^{(k)}=u^{n+1}.
    \end{equation}
\end{theorem}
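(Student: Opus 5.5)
The plan is to use Lemma~\ref{lemma:Phi} as a Lyapunov-type estimate for the merit sequence $\{\Phi^{(k)}\}$. First I would check that for $\alpha\in(0,\frac{\sqrt5+1}{2})$ both constants in the lemma are positive: $\eta=\max(1-\alpha,1-\alpha^{-1})\ge 0$, so $\Phi^{(k)}\ge 0$. Also $c(\alpha)>0$, since for $\alpha>1$ we have $1+\alpha^{-1}-\alpha>0$ exactly when $\alpha^2-\alpha-1<0$, i.e.\ $\alpha<\frac{1+\sqrt5}{2}$. Hence $\{\Phi^{(k)}\}$ is nonnegative and nonincreasing. It is therefore bounded by $\Phi^{(0)}$ and converges.

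Summing the inequality of Lemma~\ref{lemma:Phi} over $k=0,\dots,K$ telescopes to
\begin{equation*}
\sum_{k=0}^{\infty}\Bigl(\alpha c(\alpha)\rho\|e_1^{(k+1)}-e_1^{(k)}\|_h^2+c(\alpha)\rho\|e_1^{(k+1)}-e_2^{(k+1)}\|_h^2\Bigr)\le \Phi^{(0)}<\infty .
\end{equation*}
This gives $e_1^{(k+1)}-e_1^{(k)}\to0$ and $e_1^{(k)}-e_2^{(k)}\to0$. By \eqref{eq:e3}, it also gives $e_3^{(k+1)}-e_3^{(k)}\to 0$.

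Next, rather than extracting subsequences, I would get convergence of $e_1$ directly from \eqref{eq:e1}. Take the inner product of \eqref{eq:e1} with $e_1^{(k+1)}$ and of \eqref{eq:e2} with $e_2^{(k+1)}$, then add. The monotonicity of $g$ gives $\langle g(u_2^{(k+1)})-g(u_2^*),e_2^{(k+1)}\rangle_h\ge0$, and the strong positivity $\langle(\tau^{-1}-\epsilon^2\Delta_h)v,v\rangle_h\ge\tau^{-1}\|v\|_h^2$ holds. Together these yield
\begin{equation*}
\tau^{-1}\|e_1^{(k+1)}\|_h^2\le -\langle e_3^{(k)},e_1^{(k+1)}-e_2^{(k+1)}\rangle_h-\rho\langle e_1^{(k+1)}-e_2^{(k+1)},e_1^{(k+1)}\rangle_h+\rho\langle e_1^{(k)}-e_2^{(k+1)},e_2^{(k+1)}\rangle_h .
\end{equation*}
Every term on the right contains one of the vanishing differences $e_1^{(k+1)}-e_2^{(k+1)}$, or $e_1^{(k)}-e_2^{(k+1)}=(e_1^{(k)}-e_1^{(k+1)})+(e_1^{(k+1)}-e_2^{(k+1)})$. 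It multiplies a bounded factor, because $\|e_1^{(k)}\|_h$, $\|e_3^{(k)}\|_h$ and hence $\|e_2^{(k)}\|_h$ are all bounded by $\Phi^{(0)}$. Rewriting the cross terms in this way, the right-hand side tends to $0$. Therefore $e_1^{(k)}\to0$, and then $e_2^{(k)}=e_1^{(k)}-(e_1^{(k)}-e_2^{(k)})\to0$. Since $u_1^*=u_2^*=u^{n+1}$ by the stationary-point lemma, this gives the claim.

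The main obstacle is the bookkeeping in the last step. One must make sure each cross term really factors through a vanishing quantity times a bounded one. In particular $\langle e_3^{(k)},e_1^{(k+1)}-e_2^{(k+1)}\rangle_h$ needs the uniform bound $\|e_3^{(k)}\|_h^2\le\alpha\rho\Phi^{(0)}$ from the definition \eqref{def:Phi}. The bound on $e_2^{(k)}$ comes from $\eta\rho\|e_1-e_2\|_h^2$ only when $\eta>0$; when $\alpha=1$ it comes from the triangle inequality together with $e_1^{(k)}-e_2^{(k)}\to0$. The positivity of $c(\alpha)$, which is where the golden-ratio threshold enters, is essential and is the only place the hypothesis on $\alpha$ is used.
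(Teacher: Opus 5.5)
Your proof is correct, but it reaches the conclusion by a different route from the paper.

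\textbf{The paper's route.} It uses Lemma~\ref{lemma:Phi} only to get that $\Phi^{(k)}$ is nonincreasing and bounded. It then extracts a convergent subsequence of $(u_1^{(k)},u_2^{(k)},u_3^{(k)})$ and asserts that the limit is the unique stationary point of $\mathcal{L}$. Finally, it upgrades $\Phi^{(k_l)}\to0$ to $\Phi^{(k)}\to0$ by monotonicity, which gives $e_1^{(k)}$, $e_3^{(k)}$, $e_1^{(k)}-e_2^{(k)}\to0$ at once.

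\textbf{Your route.} You telescope the lemma to get square-summability of $e_1^{(k+1)}-e_1^{(k)}$ and $e_1^{(k)}-e_2^{(k)}$. You then test \eqref{eq:e1} against $e_1^{(k+1)}$ and \eqref{eq:e2} against $e_2^{(k+1)}$. I checked the resulting bound on $\tau^{-1}\|e_1^{(k+1)}\|_h^2$: it is exactly as you wrote, and every term on its right is a bounded factor times a vanishing one.

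\textbf{What your route buys.} It never has to identify cluster points as KKT points. The paper's step ``there exists a subsequence which converges to the unique stationary point'' is stated without justification. Completing it needs precisely the vanishing differences you derive. It also needs a passage to the limit in the $u_2$-equation through the singular map $g$. That passage is legitimate only after noting that $g(u_2^{(k_l)})$ stays bounded, so the cluster point lies in $(0,1)$. Your estimate uses only the monotonicity of $g$, so the logarithmic singularity never enters.

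\textbf{What it costs.} You rely essentially on the $\tau^{-1}$-coercivity of $\tau^{-1}-\epsilon^2\Delta_h$. The compactness template needs only monotonicity of both blocks plus uniqueness of the stationary point. Here the coercivity holds unconditionally, so nothing is lost. The paper's route also yields $e_3^{(k)}\to0$ for free. You recover it in one line from \eqref{eq:e1}: $e_3^{(k)}=-(\tau^{-1}-\epsilon^2\Delta_h)e_1^{(k+1)}-\rho(e_1^{(k+1)}-e_2^{(k+1)})\to0$.

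\textbf{A minor point inherited from the lemma as stated.} The lemma's proof of \eqref{eq:inequality4} uses the $u_1$-error equation for $e_1^{(k)}$, which involves $e_3^{(k-1)}$. It also uses $e_3^{(k)}-e_3^{(k-1)}=\alpha\rho(e_1^{(k)}-e_2^{(k)})$. Neither is available at $k=0$ under the initialization $u_1^{(0)}=u_2^{(0)}=u^n$, $u_3^{(0)}=0$. So the lemma is really established only for $k\ge1$. Your telescoped sum should therefore start at $k=1$, with $\Phi^{(1)}$ in place of $\Phi^{(0)}$; nothing else in your argument changes.
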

\begin{proof}
    Notice that $c(\alpha)>0$ is equivalent to $\alpha \in (0,\frac{\sqrt{5}+1}{2})$. By Lemma \ref{lemma:Phi}, the sequence $\Phi^{(k)}$ is bounded and convergent because it is monotonically decreasing and bounded below by zero \cite{1976Principles}, thus the sequences
    \begin{align*}
        \{e_3^{(k)}\},\{e_1^{(k)}\},\{e_1^{(k)}-e_2^{(k)}\}
    \end{align*}
    are bounded. Then, the sequences
       $ \{u_1^{(k)}\},\{u_2^{(k)}\},\{u_3^{(k)}\}$ are bounded. Therefore, there exists a subsequence $\{(u_1^{(k_l)},u_2^{(k_l)},u_3^{(k_l)}) \}$ which converges to the unique stationary point of $\mathcal{L}$, i.e.
       \begin{equation}
        \begin{aligned}
           \lim_{l\to +\infty}(u_1^{(k_l)},u_2^{(k_l)},u_3^{(k_l)})
           &=(u_1^*,u_2^*,u_3^*)\\
           &=\left(u^{n+1},u^{n+1},-\frac{1}{\tau}(u_1^{n+1}-u^n)+\epsilon^2\Delta_hu_1^{n+1}\right).
        \end{aligned}
       \end{equation}
      Since $\{\Phi^{(k)}\}$ is convergent and has a subsequence $\{\Phi^{(k_l)}\}$ converging to $0$, the sequence $\{\Phi^{(k)}\}$ converges to $0$ itself \cite{1976Principles}. Then we have
      \begin{equation}
          \lim_{k\to +\infty}\|e_3^{(k)}\|_h=\lim_{k\to +\infty}\|e_1^{(k)}\|_h=\lim_{k\to +\infty}\|e_1^{(k)}-e_2^{(k)}\|_h=0.
      \end{equation}
      The triangle inequality \begin{equation}
          \|e_2^{(k)}\|_h\leq \|e_2^{(k)}-e_1^{(k)}\|_h+\|e_1^{(k)}\|_h 
      \end{equation}
      implies 
      \begin{equation}
          \lim_{k\to +\infty}\|e_2^{(k)}\|_h=0. 
      \end{equation}
      We have thus completed the proof.
\end{proof}

In what follows, we prove that the algorithm converges at a linear rate. 
The proof relies on the following lemma, which provides a sufficient condition for linear convergence. 
\begin{lemma}\label{lm:J}
    The objective function $J(u)=J_1(u)+J_2(u)$ in \eqref{eq:Ju} satisfies the regularity conditions required for linear convergence: 
$J_1(u)$ is strongly convex with a Lipschitz continuous gradient, and $J_2(u)$ is strongly convex.
\end{lemma}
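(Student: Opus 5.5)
We verify each property by computing Hessians on $\ch$ with respect to $\langle\cdot,\cdot\rangle_h$, identifying $\nabla J$ and $\nabla^2 J$ with Riesz representatives as in the proof of \Cref{thm:well-posedness}.

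\textbf{$J_1$.} It is quadratic with constant Hessian
\begin{equation*}
\nabla^2 J_1 = \tfrac{1}{\tau} I - \epsilon^2\Delta_h .
\end{equation*}
Since $-\Delta_h$ is positive semi-definite (the corollary following \eqref{eq:sbp2}), we get $\nabla^2 J_1 \succeq \tfrac1\tau I$, so $J_1$ is $\tfrac1\tau$-strongly convex. For the upper bound, write $\langle \phi,-\Delta_h\phi\rangle_h=\|\nabla_h\phi\|_h^2$. Using $(a-b)^2\le 2a^2+2b^2$ on each difference quotient together with periodicity gives $\|\nabla_h\phi\|_h^2\le \tfrac{8}{h^2}\|\phi\|_h^2$; equivalently, the Fourier symbol of $-\Delta_h$ is bounded by $8/h^2$. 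Hence
\begin{equation*}
\nabla J_1(u)=\tfrac1\tau(u-u^n)-\epsilon^2\Delta_h u
\end{equation*}
is Lipschitz with constant $L_1=\tfrac1\tau+\tfrac{8\epsilon^2}{h^2}$. This constant is finite for fixed $h$, independent of any separation property.

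\textbf{$J_2$.} This is the only genuinely nonlinear part. It is defined on the open box $\{0<u_{i,j}<1\}$ and is pointwise separable. Its Hessian is diagonal,
\begin{equation*}
\nabla^2 J_2(u)=\mathrm{diag}\bigl\{ (u_{i,j}(1-u_{i,j}))^{-1}\bigr\},
\end{equation*}
the linear term $\theta\langle u,1-2u^n\rangle_h$ contributing nothing. Since $u(1-u)\le\tfrac14$ on $(0,1)$, every diagonal entry is at least $4$, so $\nabla^2J_2\succeq 4I$.

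To avoid relying on second derivatives near the singular boundary, I would state strong convexity in monotonicity form, which is the form used in the linear-convergence argument:
\begin{equation*}
\langle \nabla J_2(v)-\nabla J_2(w),v-w\rangle_h\ge 4\|v-w\|_h^2 .
\end{equation*}
This follows pointwise from the mean value theorem applied to $g(x)=\log x-\log(1-x)$, since $g'(\xi)=\tfrac{1}{\xi(1-\xi)}\ge4$ for $\xi$ between $v_{i,j}$ and $w_{i,j}$, which lie in $(0,1)$. The key point to stress is that this lower bound holds uniformly on the whole domain, so no strict separation assumption is needed. This is what distinguishes the present result from the Cahn--Hilliard setting in \cite{li2025overcoming}.

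The only delicate step is to make sure we do not claim a Lipschitz gradient for $J_2$. It has none, since $g'$ blows up at $0$ and $1$, and the lemma correctly asks only for strong convexity of $J_2$.
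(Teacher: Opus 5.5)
Your proposal is correct and follows the paper's proof: you compute the Hessians and read off $\mu_1=\tau^{-1}$ and $L_1=\tau^{-1}+8\epsilon^2/h^2$ from the spectral bound on $-\Delta_h$, then get $\mu_2=4$ from $u(1-u)\le\tfrac14$ and observe that $\nabla J_2$ is not Lipschitz. Your elementary $(a-b)^2\le 2a^2+2b^2$ bound and the mean-value monotonicity formulation only spell out what the paper cites as known spectral facts and handle the open domain of $J_2$ a bit more carefully.
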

\begin{proof}
    By direct computation, one obtains
\begin{subequations}
\begin{align}
\label{eq:Hessian1}
&\nabla^2J_1(u)=\tau^{-1}I-\epsilon^2\Delta_h,\\
\label{eq:Hessian2}
&\nabla^2J_2(u)=G=\text{diag}\left \{\frac{1}{u_{i,j}(1-u_{i,j})}\right \}_{i,j=1}^{N}.
\end{align}
\end{subequations}
From \eqref{eq:Hessian1}, the function $J_1(u)$ is strongly convex with constant $\mu_1>0$ 
and has a Lipschitz continuous gradient with constant $L_1>0$, which are given by
\begin{equation}
\begin{aligned}
    \mu_1&=\tau^{-1},\\
    L_1&=\tau^{-1}+\frac{8\epsilon^2}{h^2}.
\end{aligned}
\end{equation}
This is due to the known spectral properties of the negative discrete Laplacian matrix $-\Delta_h$ 
with $\lambda_{\min}(-\Delta_h)=0$ and $\lambda_{\max}(-\Delta_h)=\frac{8}{h^2}$.

Similarly, with \eqref{eq:Hessian2}, the function $J_2(u)$ is strongly convex with constant $\mu_2=4>0$, 
yet its gradient is not Lipschitz continuous.
\end{proof}

Defining the sequence 
\begin{equation}
R^{(k)}=\rho \|e_1^{k}\|_h^2+\rho^{-1}\|e_3^{k}\|_h^2, 
\end{equation}
the following theorem describes the linear convergence rate of $\{R^{(k)}\}$ in \ref{alg:ADMM}.
\begin{theorem}
Let the sequence $\{R^{(k)}\}$ be defined as above. It is linearly convergent in the sense that there exists a constant $\delta>0$ such that 
\begin{equation}
    R^{(k)}\geq (1+\delta) R^{(k+1)},
\end{equation}
for all $k\in \mathbb{N}$. In the special case where the step size for multiplier update is fixed as $\alpha=1$, the constant $\delta$ takes the following explicit form:
\begin{equation}
\delta=2\left(\frac{\rho}{\mu_1}+\frac{L_1}{\rho}\right)^{-1}.
\end{equation}
Choosing $\rho=\sqrt{L_1\mu_1}$ yields the largest $\delta$,
\begin{equation}
\delta_{\max}=\sqrt{\frac{\mu_1}{L_1}}=\sqrt{\frac{h^2}{h^2+8\tau \epsilon^2}}.
\end{equation}
\end{theorem}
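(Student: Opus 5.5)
The plan is to work with the error equations \eqref{eq:e2}, \eqref{eq:e1}, \eqref{eq:e3} from the proof of Lemma~\ref{lemma:Phi}. Write $A=\tau^{-1}-\epsilon^2\Delta_h$ and $r^{(k+1)}=e_1^{(k+1)}-e_2^{(k+1)}$. By Lemma~\ref{lm:J}, $A$ is symmetric with $\mu_1 I\preceq A\preceq L_1 I$. Combining \eqref{eq:e1} with \eqref{eq:e3} gives
\begin{equation*}
A e_1^{(k+1)} = -e_3^{(k+1)} - (1-\alpha)\rho\, r^{(k+1)} .
\end{equation*}
This is the key structural identity: the smooth, strongly convex block $J_1$ is updated last. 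Hence, up to the $(1-\alpha)$ correction, the new multiplier error is $-Ae_1^{(k+1)}$. Two spectral bounds follow from it:
\begin{equation*}
\langle Ae_1,e_1\rangle_h\ge\mu_1\|e_1\|_h^2,\qquad \langle Ae_1,e_1\rangle_h\ge L_1^{-1}\|Ae_1\|_h^2 .
\end{equation*}
Together they control both components of $R^{(k+1)}$ by the single quantity $\langle Ae_1^{(k+1)},e_1^{(k+1)}\rangle_h$.

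\textbf{Step 1 (case $\alpha=1$).} First I derive a one-step descent identity for $R$. Expand
\begin{equation*}
R^{(k)}-R^{(k+1)}=\rho\bigl(\|e_1^{(k)}\|_h^2-\|e_1^{(k+1)}\|_h^2\bigr)+\rho^{-1}\bigl(\|e_3^{(k)}\|_h^2-\|e_3^{(k+1)}\|_h^2\bigr).
\end{equation*}
Substitute $e_3^{(k)}=e_3^{(k+1)}-\rho r^{(k+1)}$ and $e_2^{(k+1)}=e_1^{(k+1)}-r^{(k+1)}$. Then insert the monotonicity inequality \eqref{eq:inequality2}, namely $\langle \rho(e_1^{(k)}-e_2^{(k+1)})+e_3^{(k)},e_2^{(k+1)}\rangle_h\ge0$. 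After completing squares, this should give
\begin{equation*}
R^{(k)}-R^{(k+1)}\ \ge\ 2\langle -e_3^{(k+1)},e_1^{(k+1)}\rangle_h+\rho\|e_1^{(k)}-e_1^{(k+1)}+r^{(k+1)}\|_h^2\ \ge\ 2\langle Ae_1^{(k+1)},e_1^{(k+1)}\rangle_h .
\end{equation*}
This is the standard Douglas--Rachford/ADMM firm-nonexpansiveness computation. Next, for any $\vartheta\in[0,1]$, split the right-hand side using the two spectral bounds and $e_3^{(k+1)}=-Ae_1^{(k+1)}$:
\begin{equation*}
2\langle Ae_1^{(k+1)},e_1^{(k+1)}\rangle_h\ \ge\ 2\vartheta\mu_1\|e_1^{(k+1)}\|_h^2+2(1-\vartheta)L_1^{-1}\|e_3^{(k+1)}\|_h^2 .
\end{equation*}
To make this dominate $\delta R^{(k+1)}$, I need $2\vartheta\mu_1\ge\delta\rho$ and $2(1-\vartheta)/L_1\ge\delta/\rho$. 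Adding $\vartheta=\delta\rho/(2\mu_1)$ and $1-\vartheta=\delta L_1/(2\rho)$ gives exactly
\begin{equation*}
\delta=2\left(\frac{\rho}{\mu_1}+\frac{L_1}{\rho}\right)^{-1}.
\end{equation*}
Minimizing $\rho/\mu_1+L_1/\rho$ over $\rho$ gives $\rho=\sqrt{L_1\mu_1}$ and $\delta_{\max}=\sqrt{\mu_1/L_1}$. Substituting $\mu_1=\tau^{-1}$ and $L_1=\tau^{-1}+8\epsilon^2/h^2$ yields the stated $\sqrt{h^2/(h^2+8\tau\epsilon^2)}$.

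\textbf{Step 2 (general $\alpha\in(0,\frac{\sqrt5+1}{2})$).} I repeat the same expansion, now with $e_3^{(k)}=e_3^{(k+1)}-\alpha\rho r^{(k+1)}$ and the perturbed identity for $Ae_1^{(k+1)}$. The difference from Step 1 is a collection of cross terms proportional to $(1-\alpha)$, involving $r^{(k+1)}$, $e_1^{(k+1)}-e_1^{(k)}$ and $e_1^{(k+1)}$. The goal is an inequality
\begin{equation*}
R^{(k)}-R^{(k+1)}\ \ge\ c_1\langle Ae_1^{(k+1)},e_1^{(k+1)}\rangle_h+c_2\rho\|r^{(k+1)}\|_h^2,\qquad c_1,c_2>0 .
\end{equation*}
To absorb the cross terms I would use weighted Young inequalities and the lower bound $\langle g(u_2^{(k+1)})-g(u_2^*),e_2^{(k+1)}\rangle_h\ge 4\|e_2^{(k+1)}\|_h^2$, which comes from the strong convexity $\mu_2=4$ in Lemma~\ref{lm:J}. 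If needed I would also use the positive terms already supplied by Lemma~\ref{lemma:Phi}. Finally, I bound $\|e_3^{(k+1)}\|_h\le\|Ae_1^{(k+1)}\|_h+|1-\alpha|\rho\|r^{(k+1)}\|_h$. Then $R^{(k+1)}\le C\bigl(\langle Ae_1^{(k+1)},e_1^{(k+1)}\rangle_h+\rho\|r^{(k+1)}\|_h^2\bigr)$, which gives some $\delta>0$ depending on $\alpha,\rho,\mu_1,L_1$.

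\textbf{Main obstacle.} The hard part is the sign bookkeeping in Step 2, that is, keeping $c_1,c_2>0$ over the whole range $\alpha\in(0,\frac{\sqrt5+1}{2})$. My first attempt would mimic the case split $\alpha\le1$ versus $\alpha>1$ of Lemma~\ref{lemma:Phi}, reusing inequality \eqref{eq:inequality4} and the same Young weights.
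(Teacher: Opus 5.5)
Your Step~1 is correct and coincides with the paper's proof, which also treats only $\alpha=1$. Both use the identity $e_3^{(k+1)}=-Ae_1^{(k+1)}$ and the same descent estimate from monotonicity of $\nabla J_2$, obtained by dropping $\rho\|e_1^{(k)}-e_1^{(k+1)}+r^{(k+1)}\|_h^2$. Your $\vartheta$-splitting with $\vartheta=\rho^2/(\rho^2+\mu_1L_1)$ is the paper's operator inequality $\rho^2(A-\mu_1I)+\mu_1A(L_1I-A)\succeq0$ in normalized form.

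Step~2, however, cannot be completed as planned, and the obstruction is not sign bookkeeping. For $\alpha\neq1$ the quantity $R$ is not a one-step Lyapunov function. So the target $R^{(k)}-R^{(k+1)}\ge c_1\langle Ae_1^{(k+1)},e_1^{(k+1)}\rangle_h+c_2\rho\|r^{(k+1)}\|_h^2$ is false in general, whatever Young weights you choose.

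Take $u^n\equiv\frac{1}{2}$. Then $u^*\equiv\frac{1}{2}$, $u_3^*=0$, all iterates stay constant, $A$ acts as $a=\tau^{-1}$, and $g'(u^*)=4$. Let $e_1^{(k)}=0$ and let $e_3^{(k)}$ be a small constant. Linearizing the $u_2$-step gives $e_2^{(k+1)}\approx e_3^{(k)}/(4+\rho)$, so for $\rho\ll4$ the terms $\rho e_2^{(k+1)}$ are negligible. Then \eqref{eq:e1} and \eqref{eq:e3} give $e_1^{(k+1)}\approx-e_3^{(k)}/(a+\rho)$ and $e_3^{(k+1)}\approx\frac{a+(1-\alpha)\rho}{a+\rho}\,e_3^{(k)}$, hence
\[
R^{(k+1)}-R^{(k)}\approx\frac{(1-\alpha)^2\rho-2\alpha a}{(a+\rho)^2}\,\|e_3^{(k)}\|_h^2 .
\]
This is positive once $(1-\alpha)^2\rho>2\alpha\tau^{-1}$; for $\alpha=1$ the numerator is $-2a$, consistent with Step~1. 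For example, with $\alpha=\frac{1}{2}$, $\tau=10^4$, $\rho=10^{-2}$, an exact computation of the linearized step gives $R^{(k+1)}\approx1.23\,R^{(k)}$.

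This configuration arises at $k=0$ from the initial guess $u_1^{(0)}\equiv\frac{1}{2}$, $u_3^{(0)}\equiv c$. For $\alpha\neq1$ the linearized one-step error map is invertible, with determinant $(1-\alpha)\rho^2/((a+\rho)(4+\rho))$. So the configuration also arises at any prescribed $k\ge1$ from a suitable initial guess. None of your tools depends on the initialization of Algorithm~\ref{alg:ADMM}: the error equations, (strong) monotonicity of $g$, \eqref{eq:inequality4}, and Lemma~\ref{lemma:Phi}. Hence they cannot yield the one-step bound for $R$.

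What does work for general $\alpha$ is to change the Lyapunov function. Keep the term $\langle Ae_1^{(k+1)},e_1^{(k+1)}\rangle_h$ that is discarded in \eqref{eq:inequality1}. The proof of Lemma~\ref{lemma:Phi} then gives, for $k\ge1$,
\[
\Phi^{(k)}-\Phi^{(k+1)}\ge2\langle Ae_1^{(k+1)},e_1^{(k+1)}\rangle_h+c(\alpha)\rho\|r^{(k+1)}\|_h^2 .
\]
Your identity $Ae_1^{(k+1)}=-e_3^{(k+1)}-(1-\alpha)\rho r^{(k+1)}$ and your two spectral bounds give $\Phi^{(k+1)}\le C\bigl(\langle Ae_1^{(k+1)},e_1^{(k+1)}\rangle_h+\rho\|r^{(k+1)}\|_h^2\bigr)$. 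Hence $\Phi^{(k)}\ge(1+\delta)\Phi^{(k+1)}$ for every $\alpha\in(0,\frac{\sqrt{5}+1}{2})$. Since $R^{(k)}\le\max(\alpha,1)\Phi^{(k)}$, you get $R^{(k)}\le\max(\alpha,1)(1+\delta)^{1-k}\Phi^{(1)}$. That is weaker than the one-step inequality for $R$ in the statement, which, in the paper as in your Step~1, is established only for $\alpha=1$.
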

\begin{proof}
We prove the assertion for the case $\alpha=1$. Let
\begin{equation}
    A=\tau^{-1}I-\epsilon^2\Delta_h .
\end{equation}
Then $A$ is symmetric positive definite on $C_h$, and
\begin{equation}
    \mu_1\|v\|_h^2
    \le
    \langle Av,v\rangle_h
    \le
    L_1\|v\|_h^2,
    \qquad \forall v\in C_h.
\end{equation}

For $\alpha=1$, the multiplier update gives
\begin{equation}\label{eq:alpha1-multiplier}
    e_3^{(k+1)}
    =
    e_3^{(k)}
    +
    \rho\left(e_1^{(k+1)}-e_2^{(k+1)}\right).
\end{equation}
The $u_1$-update error equation is
\begin{equation}\label{eq:u1-error-alpha1}
    A e_1^{(k+1)}
    +
    \rho\left(e_1^{(k+1)}-e_2^{(k+1)}\right)
    +
    e_3^{(k)}
    =
    0 .
\end{equation}
Combining \eqref{eq:alpha1-multiplier} and \eqref{eq:u1-error-alpha1}, we obtain
\begin{equation}\label{eq:e3-Ae1}
    e_3^{(k+1)}
    =
    -A e_1^{(k+1)} .
\end{equation}

Next, the $u_2$-update error equation reads
\begin{equation}\label{eq:u2-error-alpha1}
    \nabla J_2(u_2^{(k+1)})
    -
    \nabla J_2(u_2^\ast)
    =
    e_3^{(k)}
    +
    \rho\left(e_1^{(k)}-e_2^{(k+1)}\right).
\end{equation}
Using \eqref{eq:alpha1-multiplier}, we have
\begin{equation}
    e_3^{(k)}
    =
    e_3^{(k+1)}
    -
    \rho\left(e_1^{(k+1)}-e_2^{(k+1)}\right).
\end{equation}
Substituting this identity into \eqref{eq:u2-error-alpha1} yields
\begin{equation}\label{eq:u2-error-new}
\begin{aligned}
    &\nabla J_2(u_2^{(k+1)})
    -
    \nabla J_2(u_2^\ast)\\
    &=
    e_3^{(k+1)}
    -
    \rho\left(e_1^{(k+1)}-e_2^{(k+1)}\right)  
    +
    \rho\left(e_1^{(k)}-e_2^{(k+1)}\right)  \\
    &=
    e_3^{(k+1)}
    -
    \rho\left(e_1^{(k+1)}-e_1^{(k)}\right).
\end{aligned}
\end{equation}
Since $J_2$ is convex, then by \eqref{eq:u2-error-new},
\begin{equation}\label{eq:J2-monotone-alpha1}
    \left\langle
    e_3^{(k+1)}
    -
    \rho\left(e_1^{(k+1)}-e_1^{(k)}\right),
    e_2^{(k+1)}
    \right\rangle_h
    \ge 0 .
\end{equation}
Using
\begin{equation}
    e_2^{(k+1)}
    =
    e_1^{(k+1)}
    -
    \left(e_1^{(k+1)}-e_2^{(k+1)}\right),
\end{equation}
we rewrite \eqref{eq:J2-monotone-alpha1} as
\begin{equation}
\begin{aligned}
0
&\le
\left\langle
e_3^{(k+1)}
-
\rho\left(e_1^{(k+1)}-e_1^{(k)}\right),
e_1^{(k+1)}
-
\left(e_1^{(k+1)}-e_2^{(k+1)}\right)
\right\rangle_h  \\
&=
\left\langle e_3^{(k+1)},e_1^{(k+1)}\right\rangle_h
-
\rho\left\langle
e_1^{(k+1)}-e_1^{(k)},
e_1^{(k+1)}
\right\rangle_h  \\
&\quad
-
\left\langle
e_3^{(k+1)},
e_1^{(k+1)}-e_2^{(k+1)}
\right\rangle_h  \\
&\quad
+
\rho\left\langle
e_1^{(k+1)}-e_1^{(k)},
e_1^{(k+1)}-e_2^{(k+1)}
\right\rangle_h .
\end{aligned}
\end{equation}
Consequently,
\begin{equation}\label{eq:key-inner-ineq}
\begin{aligned}
&\rho\left\langle
e_1^{(k+1)},
e_1^{(k+1)}-e_1^{(k)}
\right\rangle_h
+
\left\langle
e_3^{(k+1)},
e_1^{(k+1)}-e_2^{(k+1)}
\right\rangle_h  \\
&\le
\left\langle
e_1^{(k+1)},
e_3^{(k+1)}
\right\rangle_h
+
\rho\left\langle
e_1^{(k+1)}-e_1^{(k)},
e_1^{(k+1)}-e_2^{(k+1)}
\right\rangle_h .
\end{aligned}
\end{equation}

We now estimate $R^{(k)}-R^{(k+1)}$. 
By \eqref{eq:alpha1-multiplier} and
\begin{equation}
    e_1^{(k)}
    =
    e_1^{(k+1)}
    -
    \left(e_1^{(k+1)}-e_1^{(k)}\right),
\end{equation}
we have
\begin{equation}\label{eq:R-diff-expand}
\begin{aligned}
&R^{(k)}-R^{(k+1)}\\
&=
\rho\left(
\|e_1^{(k)}\|_h^2
-
\|e_1^{(k+1)}\|_h^2
\right)  
+
\rho^{-1}\left(
\|e_3^{(k)}\|_h^2
-
\|e_3^{(k+1)}\|_h^2
\right)  \\
&=
\rho\left\|
e_1^{(k+1)}-e_1^{(k)}
\right\|_h^2
-
2\rho
\left\langle
e_1^{(k+1)},
e_1^{(k+1)}-e_1^{(k)}
\right\rangle_h  \\
&\quad
+
\rho\left\|
e_1^{(k+1)}-e_2^{(k+1)}
\right\|_h^2
-
2
\left\langle
e_3^{(k+1)},
e_1^{(k+1)}-e_2^{(k+1)}
\right\rangle_h .
\end{aligned}
\end{equation}
Using \eqref{eq:key-inner-ineq} in \eqref{eq:R-diff-expand}, we obtain
\begin{equation}\label{eq:R-diff-lower1}
\begin{aligned}
&R^{(k)}-R^{(k+1)}\\
&\ge
\rho\left\|
e_1^{(k+1)}-e_1^{(k)}
\right\|_h^2
+
\rho\left\|
e_1^{(k+1)}-e_2^{(k+1)}
\right\|_h^2  
-
2\left\langle
e_1^{(k+1)},
e_3^{(k+1)}
\right\rangle_h  \\
&\quad
-
2\rho\left\langle
e_1^{(k+1)}-e_1^{(k)},
e_1^{(k+1)}-e_2^{(k+1)}
\right\rangle_h  \\
&=
-2\left\langle
e_1^{(k+1)},
e_3^{(k+1)}
\right\rangle_h  \\
&\quad
+
\rho
\left\|
\left(e_1^{(k+1)}-e_1^{(k)}\right)
-
\left(e_1^{(k+1)}-e_2^{(k+1)}\right)
\right\|_h^2  \\
&\ge
-2\left\langle
e_1^{(k+1)},
e_3^{(k+1)}
\right\rangle_h .
\end{aligned}
\end{equation}
By \eqref{eq:e3-Ae1}, this gives
\begin{equation}\label{eq:R-diff-lower-A}
    R^{(k)}-R^{(k+1)}
    \ge
    2\left\langle
    A e_1^{(k+1)},e_1^{(k+1)}
    \right\rangle_h .
\end{equation}

On the other hand, by \eqref{eq:e3-Ae1},
\begin{equation}\label{eq:R-kplus1-A}
\begin{aligned}
R^{(k+1)}
&=
\rho\|e_1^{(k+1)}\|_h^2
+
\rho^{-1}\|e_3^{(k+1)}\|_h^2  \\
&=
\rho\|e_1^{(k+1)}\|_h^2
+
\rho^{-1}\|A e_1^{(k+1)}\|_h^2 .
\end{aligned}
\end{equation}

We next prove that
\begin{equation}\label{eq:spectral-bound}
    2\langle Av,v\rangle_h
    \ge
    \delta\left(
    \rho\|v\|_h^2+\rho^{-1}\|Av\|_h^2
    \right),
    \qquad \forall v\in C_h,
\end{equation}
where
\begin{equation}\label{eq:delta-def}
    \delta
    =
    2\left(
    \frac{\rho}{\mu_1}
    +
    \frac{L_1}{\rho}
    \right)^{-1}
    =
    \frac{2\mu_1\rho}{\rho^2+\mu_1L_1}.
\end{equation}
Indeed, since
\begin{equation}
    \mu_1 I\preceq A\preceq L_1 I,
\end{equation}
we have
\begin{equation}
    A-\mu_1 I\succeq 0,
    \qquad
    L_1 I-A\succeq 0.
\end{equation}
With the fact that if both positive semidefinite operators are polynomials of the same symmetric operator $A$, then their
product is positive semidefinite, we have
\begin{equation}
    A(L_1-A)\succeq 0.
\end{equation}
Thus
\begin{equation}
    \rho^2(A-\mu_1 I)+\mu_1 A(L_1 I-A)\succeq 0.
\end{equation}
Expanding this inequality gives
\begin{equation}
    (\rho^2+\mu_1L_1)A-\mu_1\rho^2 I-\mu_1 A^2\succeq 0.
\end{equation}
Taking the inner product with any $v\in C_h$, we get
\begin{equation}
    (\rho^2+\mu_1L_1)\langle Av,v\rangle_h
    \ge
    \mu_1\rho^2\|v\|_h^2+\mu_1\|Av\|_h^2 .
\end{equation}
Multiplying both sides by $2/(\rho^2+\mu_1L_1)$ yields
\begin{equation}
    2\langle Av,v\rangle_h
    \ge
    \frac{2\mu_1\rho^2}{\rho^2+\mu_1L_1}\|v\|_h^2
    +
    \frac{2\mu_1}{\rho^2+\mu_1L_1}\|Av\|_h^2 .
\end{equation}
By the definition \eqref{eq:delta-def}, the right-hand side is precisely
\begin{equation}
    \delta\left(
    \rho\|v\|_h^2+\rho^{-1}\|Av\|_h^2
    \right).
\end{equation}
Therefore, \eqref{eq:spectral-bound} holds.

Taking $v=e_1^{(k+1)}$ in \eqref{eq:spectral-bound}, and using
\eqref{eq:R-diff-lower-A} and \eqref{eq:R-kplus1-A}, we obtain
\begin{equation}
    R^{(k)}-R^{(k+1)}
    \ge
    \delta R^{(k+1)}.
\end{equation}
Equivalently,
\begin{equation}
    R^{(k)}
    \ge
    (1+\delta)R^{(k+1)}.
\end{equation}
This proves the linear convergence of $\{R^{(k)}\}$.

Finally, since
\begin{equation}
    \delta
    =
    2\left(\frac{\rho}{\mu_1}+\frac{L_1}{\rho}\right)^{-1},
\end{equation}
the above lower bound for $\delta$ is maximized when
\begin{equation}
    \rho=\sqrt{\mu_1L_1}
\end{equation}
by the arithmetic-geometric mean inequality.
Substituting this value gives
\begin{equation}
    \delta_{\max}
    =
    \sqrt{\frac{\mu_1}{L_1}}.
\end{equation}
Since
\begin{equation}
    \mu_1=\tau^{-1},
    \qquad
    L_1=\tau^{-1}+\frac{8\epsilon^2}{h^2},
\end{equation}
we finally obtain
\begin{equation}
    \delta_{\max}
    =
    \sqrt{
    \frac{\tau^{-1}}
    {\tau^{-1}+8\epsilon^2/h^2}
    }
    =
    \sqrt{
    \frac{h^2}{h^2+8\tau\epsilon^2}
    }.
\end{equation}
The proof is complete.

\end{proof}

\begin{remark}
We briefly discuss the bound preserving property of the algorithm. 
For the update of $u_2$, the presence of the logarithmic term ensures $u_2^{(k)}\in(0,1)$ at a point-wise level for all $k \in \mathbb{N}$. 
\ref{thm:convergence} guarantees that $u_2^{(k)}$ is a good approximation of $u^{n+1}$ after a sufficiently large number of iterations. 
Consequently, we obtain the bound preserving property.
\end{remark}

\section{Numerical results}
\label{section:numerical}
This section presents numerical experiments designed to validate the theoretical analysis and evaluate the practical performance of the proposed algorithm. 
Specifically, we verify its core properties including linear convergence, bound-preserving property, and energy stability in \Cref{subsec:test}. 
Numerical simulations in two- and three-dimensional settings are demonstrated in \Cref{subsec:2d} and \Cref{subsec:3d}, respectively, 
illustrating the effectiveness and robustness of our solver.

\subsection{Convergence test}
\label{subsec:test}
In this numerical experiment, the parameters are set as follows:
\begin{equation}
    L=2\pi,\ \epsilon=0.10,\ \tau=10^{-4},\ T=0.1,\ \theta=4.0.
\end{equation}
The initial condition is
\begin{equation}
    u_0(x,y)=0.5+0.25\sin x\sin y.
\end{equation}

For the iterative solver, the penalty parameter $\rho$ is dynamically updated based on the primal and dual residuals to ensure balanced convergence. 
Specifically, the following dynamic update strategy is adopted.
\begin{equation}
\rho^{(k+1)}=
\begin{cases}
\gamma_1\rho^{(k)},&\text{if } r^{(k)}> \nu s^{(k)},\\
\rho^{(k)}/\gamma_2,&\text{if } s^{(k)}> \nu r^{(k)},\\
\rho^{(k)},&\text{otherwise},
\end{cases}
\end{equation}
where $r^{(k)}=\|u_1^{(k)}-u_2^{(k)}\|_h$ and $s^{(k)}=\|u_1^{(k)}-u_1^{(k-1)}\|_h$ denote the primal residual and the dual residual, respectively.
In our simulations, the adjustment parameters are chosen as $\nu=10$ and $\gamma_1=\gamma_2=2$, which yield well-balanced primal and dual residuals.

The convergence criterion is set as
\begin{equation}
    \max\{r^{(k)},s^{(k)}\}\leq \gamma=10^{-8},
\end{equation}

The numerical solution obtained with $N=512$, $\tau=10^{-4}$ is taken as the exact solution, and the spatial convergence rate is tested for $N=16,32,64,128,256$.
The temporal convergence order test is analogous.
The test results, displayed in \Cref{ta:rate}, confirm the second order in space and first order in time for our solver.

Fixing $N=256, \tau = 0.01$ and $T=1.0$, we show the linear convergence and other properties of the algorithm in \Cref{fig:1}.
From \Cref{fig:11}, we see the number of ADMM iterations at each time step, which demonstrates the high efficiency of the proposed algorithm. 
The discrete energy, plotted in \Cref{fig:12}, decays monotonically as expected, confirming energy stability.
\Cref{fig:13} and \Cref{fig:14} trace the maximum and minimum of $u$ over time, both staying safely within the physical bounds $(0,1)$, demonstrating the bound-preserving property. 
\Cref{fig:15} displays the residual during the second time step, 
where the residual is defined by
\begin{equation}
    R(u)=\left \|\frac{u-u^n}{\tau}-\epsilon^2\Delta_hu
    +\log(u)-\log(1-u)+\theta(1-2u^n) \right \|_h.
\end{equation}
The linear decay of the residual, as observed on a semi-logarithmic scale, evidently confirms the linear convergence rate of our solver.
Finally, \Cref{fig:16} captures the dynamical evolution process of the phase variable at different time steps.
\begin{table}
    \centering
    \begin{tabular}{|c|c|c||c|c|c|}
    \hline
    \multicolumn{3}{|c||}{Spatial convergence}  & \multicolumn{3}{|c|}{Temporal convergence} \\
    \hline
    $h$&Error&Rate&$\tau$&Error&Rate\\
    \hline
    2$\pi$/16&3.026E-2&-&0.02&4.336E-2&-\\
    \hline
    2$\pi$/32&7.635E-3&1.99&0.02/2&2.246E-2&0.95\\
    \hline
    2$\pi$/64&1.918E-3&1.99&0.02/4&1.086E-2&1.05\\  
    \hline
    2$\pi$/128&4.853E-4&1.98&0.02/8&4.741E-3&1.20\\
    \hline
    2$\pi$/256&1.282E-4&1.92&0.02/16&1.595E-3&1.57\\
    \hline

    \end{tabular}
    \caption{Errors and convergence rates.}
    \label{ta:rate}
\end{table}

\begin{figure}[htbp]
    \centering
    \begin{subfigure}[b]{0.49\textwidth}
        \centering
        \includegraphics[width=\textwidth]{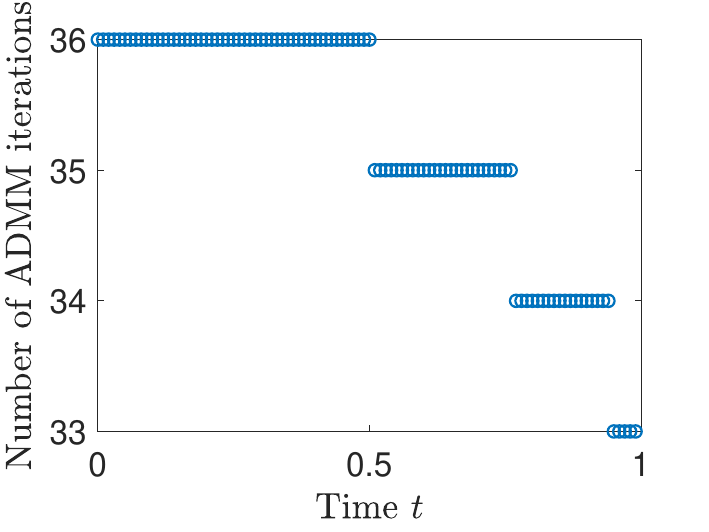}
        \caption{The number of iterations over time $t$.}
        \label{fig:11}
    \end{subfigure}
    \hfill
    \begin{subfigure}[b]{0.49\textwidth}
        \centering
        \includegraphics[width=\textwidth]{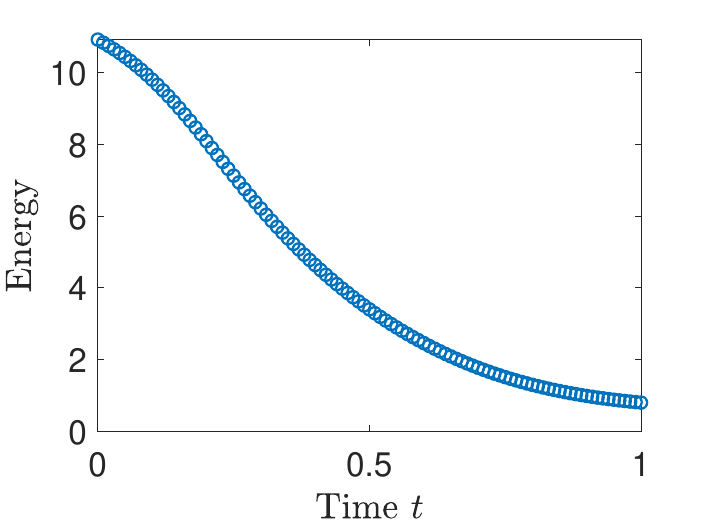}
        \caption{The energy over time $t$.}
        \label{fig:12}
    \end{subfigure}
    
    \vspace{0.5cm}
    
    \begin{subfigure}[b]{0.49\textwidth}
        \centering
        \includegraphics[width=\textwidth]{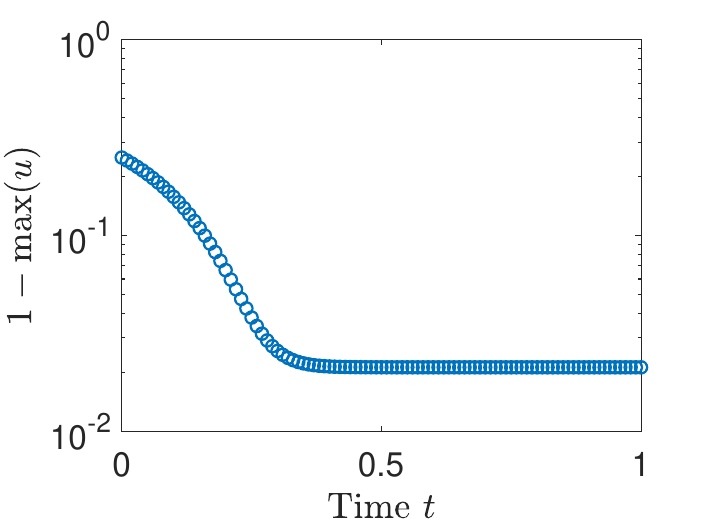}
        \caption{$1-\max (u)$ over time $t$.}
        \label{fig:13}
    \end{subfigure}
    \hfill
    \begin{subfigure}[b]{0.49\textwidth}
        \centering
        \includegraphics[width=\textwidth]{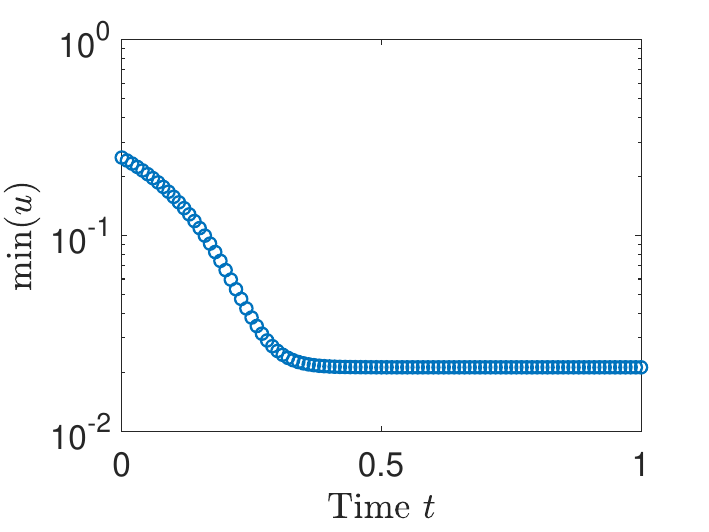}
        \caption{$\min (u)$ over time $t$}
        \label{fig:14}
    \end{subfigure}
    \vspace{0.5cm}

    \begin{subfigure}[b]{0.49\textwidth}
        \centering
        \includegraphics[width=\textwidth]{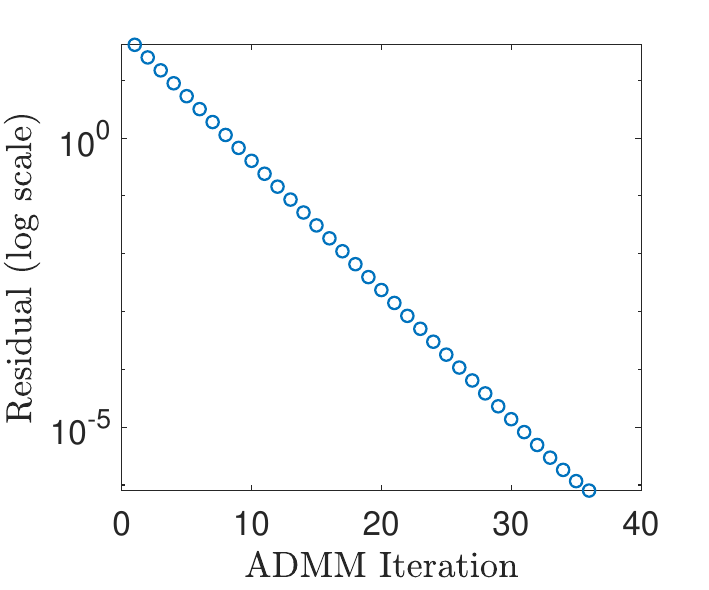}
        \caption{The primal residual in the second time step.}
        \label{fig:15}
    \end{subfigure}
    
    \caption{Convergence test}
    \label{fig:1}
\end{figure}

\begin{figure}[htbp]
    \centering
    \includegraphics[width=\textwidth]{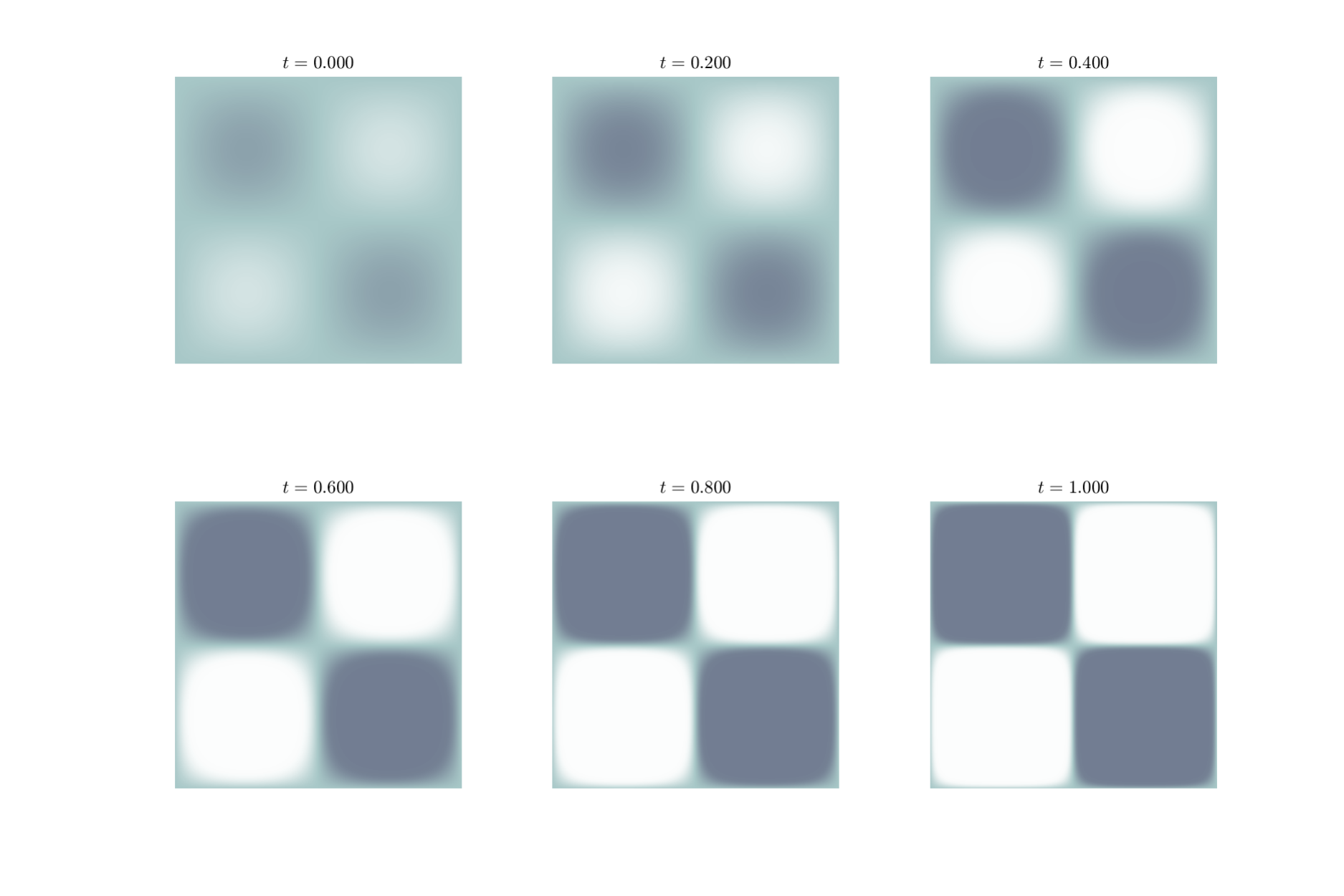}
    \caption{The dynamical evolution over time $t$.}
    \label{fig:16}
    \hfill
\end{figure}

\subsection{Two-dimensional simulation}
\label{subsec:2d}
In this numerical experiment, we show a two-dimensional simulation process here.

The following parameters are adopted in this example:
\begin{equation}
L=2.0,\ N=512,\ \epsilon=0.05,\ \tau=0.1,\ T=10.
\end{equation}
The parameter $\theta$ is chosen as the variable parameter in this experiment, which is taken as $\theta=3.0,4.0,5.0$.
The ability to utilize an exceptionally large time step of $\tau=0.1$ explicitly demonstrates the unconditional stability of our method, allowing for robust long-time simulations without the severe time-step constraints typical of explicit schemes.

The initial condition is
\begin{equation}
    u_0(x,y)=0.01+0.98\,\text{rand}(x,y).
\end{equation}
where $\text{rand}(x, y)$ is the function producing the random numbers in $(0,1)$. 

We set the convergence criterion for \Cref{alg:ADMM} as:
\begin{equation}
\max\{r,s\}\leq \gamma=10^{-8}.
\end{equation}

For $\theta=3.0$, the number of iterations, the energy, $1-\max (u)$ and $\min (u)$ over time $t$ are given in \Cref{fig:2_1}.
The dynamic evolution of the phase variable is depicted in \Cref{fig:2_2} for different choices of $\theta$.

\begin{figure}[htbp]
    \centering
    \begin{subfigure}[b]{0.49\textwidth}
        \centering
        \includegraphics[width=\textwidth]{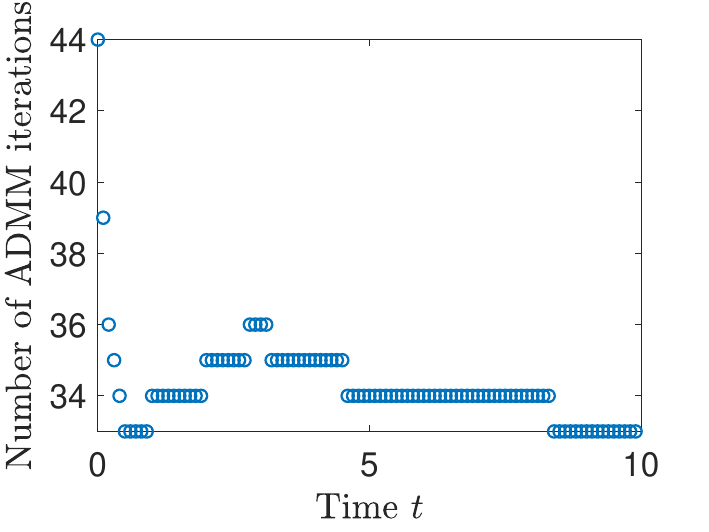}
        \caption{The number of iterations over time $t$.}
        \label{fig:21}
    \end{subfigure}
    \hfill
    \begin{subfigure}[b]{0.49\textwidth}
        \centering
        \includegraphics[width=\textwidth]{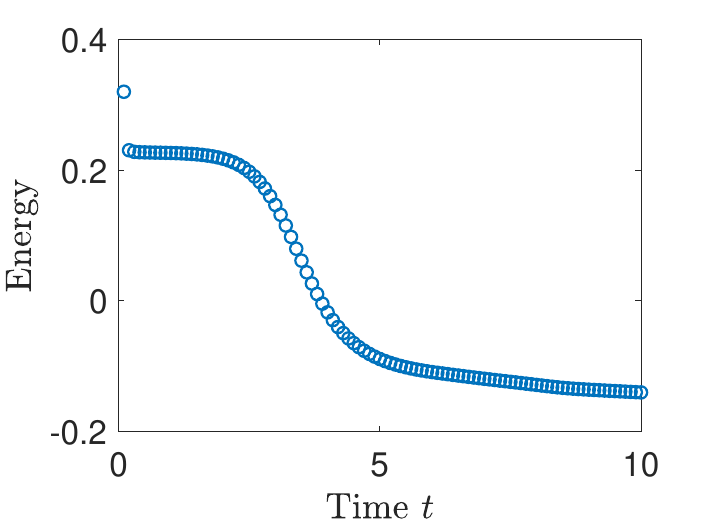}
        \caption{The energy over time $t$.}
        \label{fig:22}
    \end{subfigure}
    
    \vspace{0.5cm}
    
    \begin{subfigure}[b]{0.49\textwidth}
        \centering
        \includegraphics[width=\textwidth]{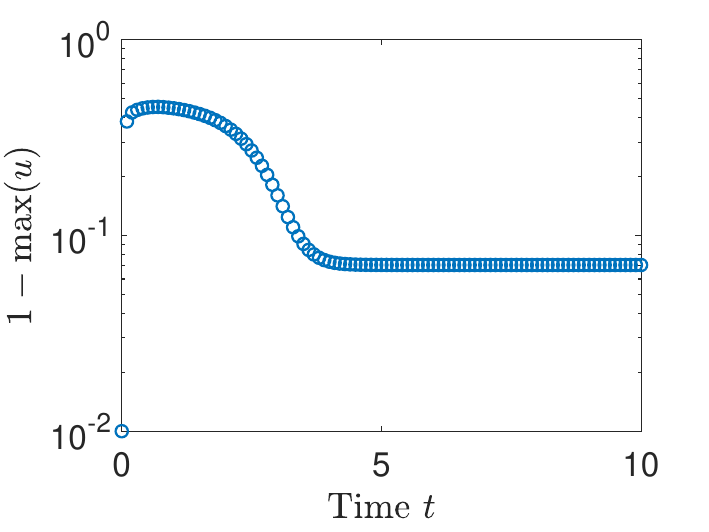}
        \caption{$1-\max (u)$ over time $t$.}
        \label{fig:23}
    \end{subfigure}
    \hfill
    \begin{subfigure}[b]{0.49\textwidth}
        \centering
        \includegraphics[width=\textwidth]{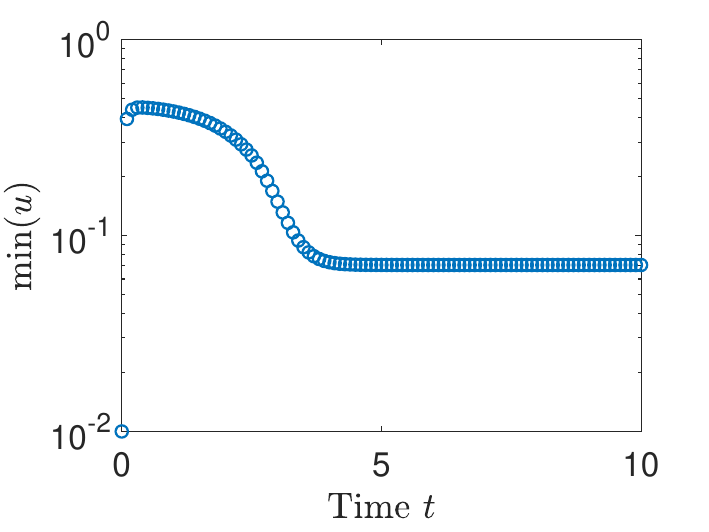}
        \caption{$\min (u)$ over time $t$}
        \label{fig:24}
    \end{subfigure}
    \vspace{0.5cm}
    
    \caption{2D simulation}
    \label{fig:2_1}
\end{figure}

\begin{figure}[htbp]
    \centering
    \begin{subfigure}[b]{1.00\textwidth}
        \centering
        \includegraphics[width=\textwidth]{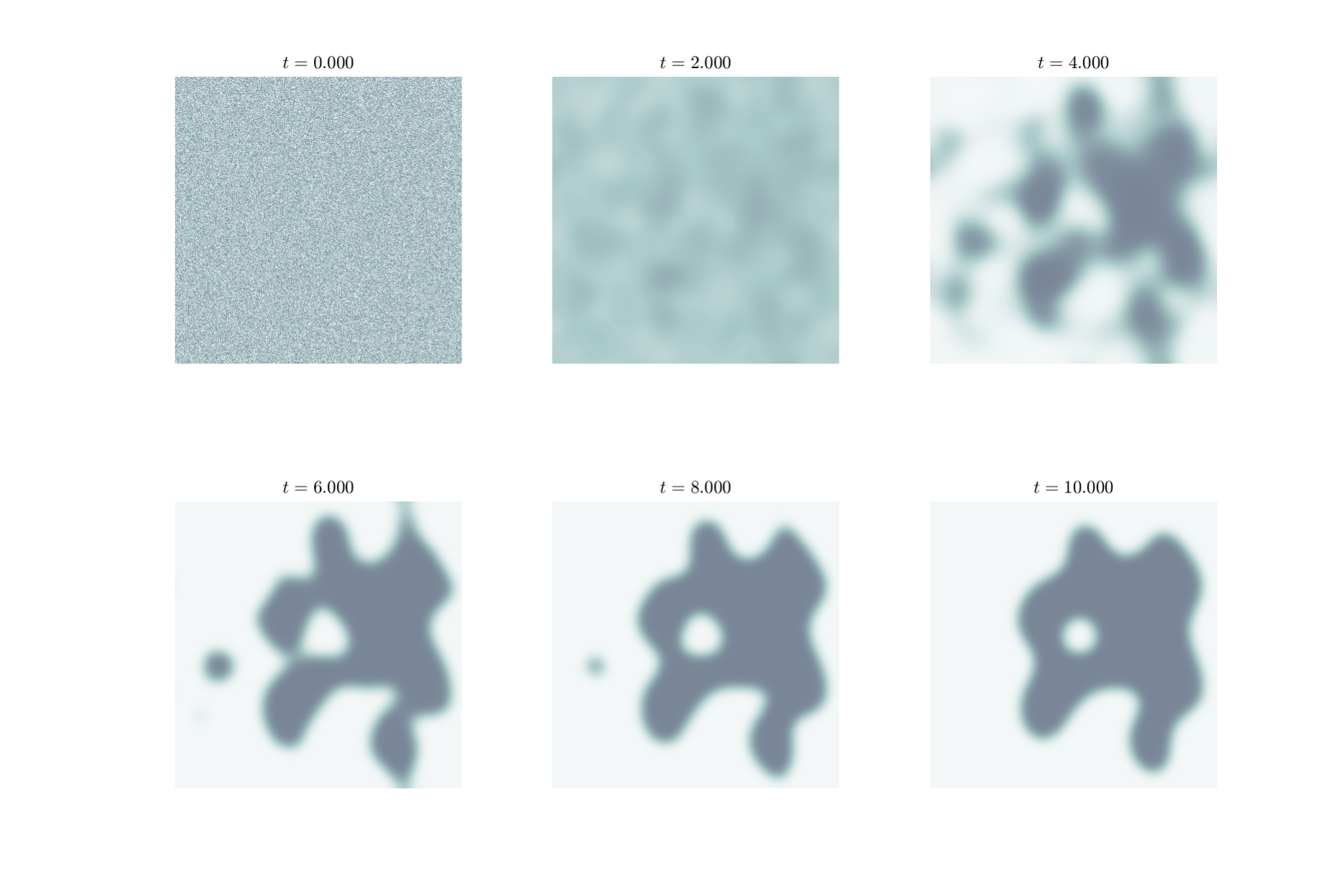}
        \caption{ $\theta=3.0$}
        \label{fig:25}
    \end{subfigure}
    \caption{2D simulation: the dynamical evolution over time $t$.}
    \label{fig:2_2}
\end{figure}

\begin{figure}[htbp]
    \ContinuedFloat
    \centering
    \begin{subfigure}[b]{1.00\textwidth}
        \centering
        \includegraphics[width=\textwidth]{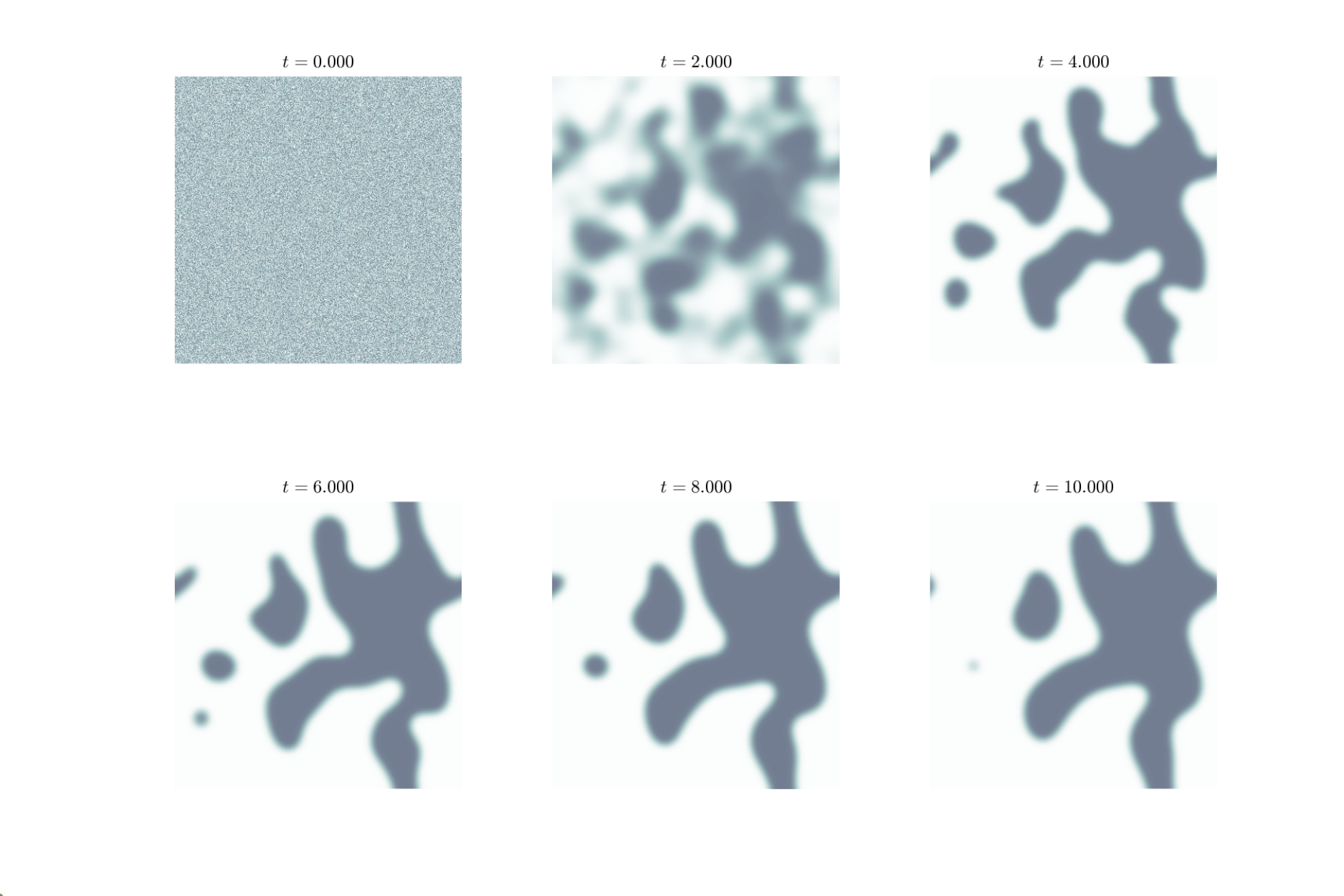}
        \caption{$\theta=4.0$}
        \label{fig:26}
    \end{subfigure}
    \caption{2D simulation: the dynamical evolution over time $t$.}
\end{figure}

\begin{figure}[htbp]
    \ContinuedFloat
    \centering
    \begin{subfigure}[b]{1.00\textwidth}
        \centering
        \includegraphics[width=\textwidth]{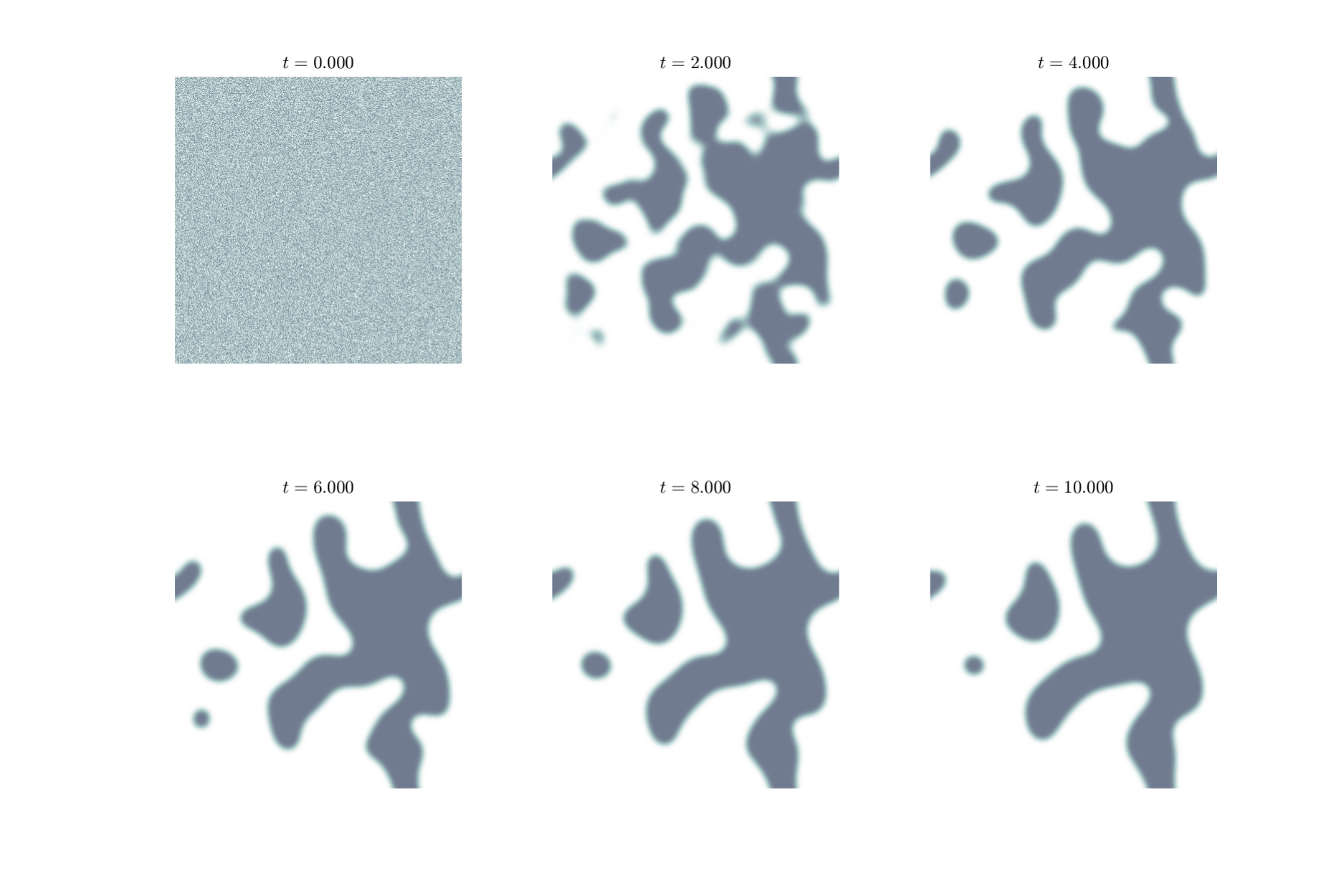}
        \caption{$\theta=5.0$}
        \label{fig:27}
    \end{subfigure}
    \caption{2D simulation: the dynamical evolution over time $t$.}
\end{figure}

\subsection{Three-dimensional simulation}
\label{subsec:3d}
We present a numerical simulation experiment in a three-dimensional setting here. The parameters we adopted are 
\begin{equation}
L=1.0,\ N=64,\ \theta=4.0,\ \tau=0.01,\ T=0.5.
\end{equation}
The parameter $\epsilon$ is chosen as the variable parameter in this experiment, which is taken as $\epsilon=0.05,0.10,0.15$.

The initial condition is
\begin{equation}
    u_0(x,y,z)=0.45+0.10\,\text{rand}(x,y,z).
\end{equation}
where $\text{rand}(x,y,z)$ is the function producing the random numbers in $(0,1)$. 

The convergence criterion is:
\begin{equation}
\max\{r,s\}\leq \gamma=10^{-8}.
\end{equation}

\begin{figure}[htbp]
    \centering
    \begin{subfigure}[b]{0.49\textwidth}
        \centering
        \includegraphics[width=\textwidth]{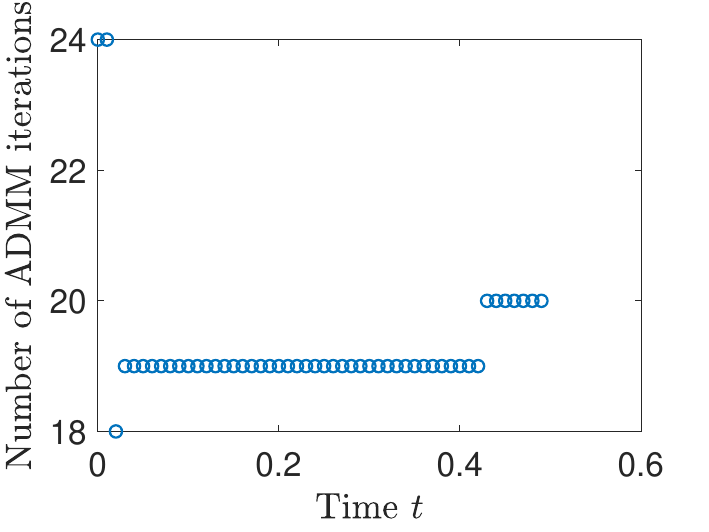}
        \caption{The number of iterations over time $t$.}
        \label{fig:31}
    \end{subfigure}
    \hfill
    \begin{subfigure}[b]{0.49\textwidth}
        \centering
        \includegraphics[width=\textwidth]{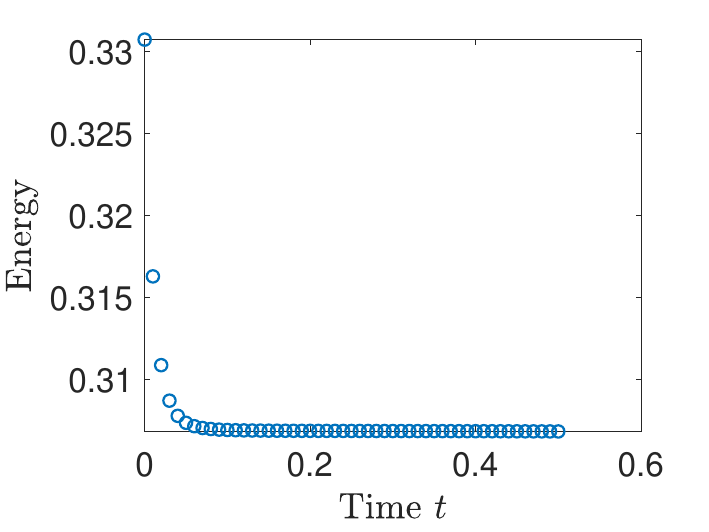}
        \caption{The energy over time $t$.}
        \label{fig:32}
    \end{subfigure}
    
    \vspace{0.5cm}
    
    \begin{subfigure}[b]{0.49\textwidth}
        \centering
        \includegraphics[width=\textwidth]{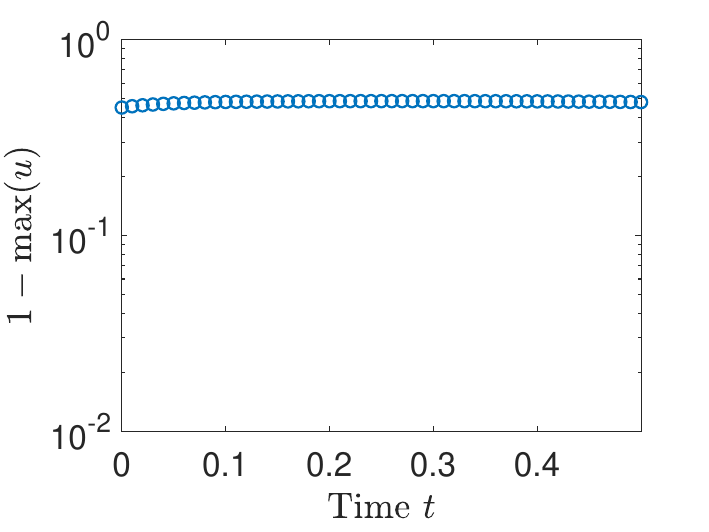}
        \caption{$1-\max (u)$ over time $t$.}
        \label{fig:33}
    \end{subfigure}
    \hfill
    \begin{subfigure}[b]{0.49\textwidth}
        \centering
        \includegraphics[width=\textwidth]{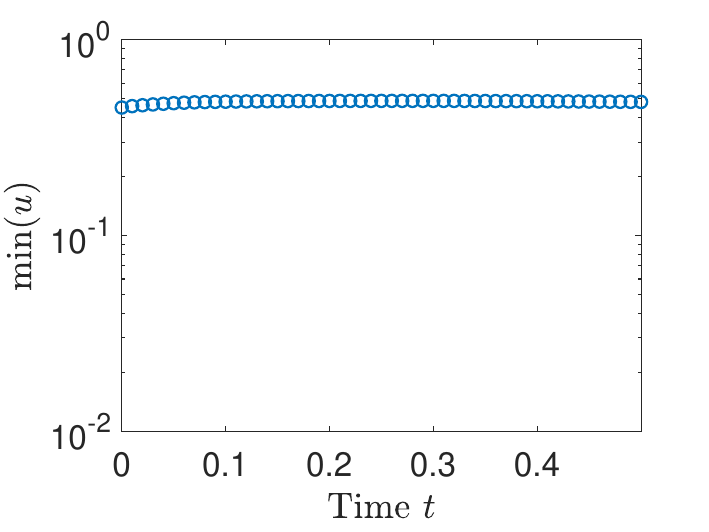}
        \caption{$\min (u)$ over time $t$}
        \label{fig:34}
    \end{subfigure}
    \vspace{0.5cm}
    
    \caption{3D simulation}
    \label{fig:3_1}
\end{figure}

\begin{figure}[htbp]
    \centering
    \begin{subfigure}[b]{1.00\textwidth}
        \centering
        \includegraphics[width=\textwidth]{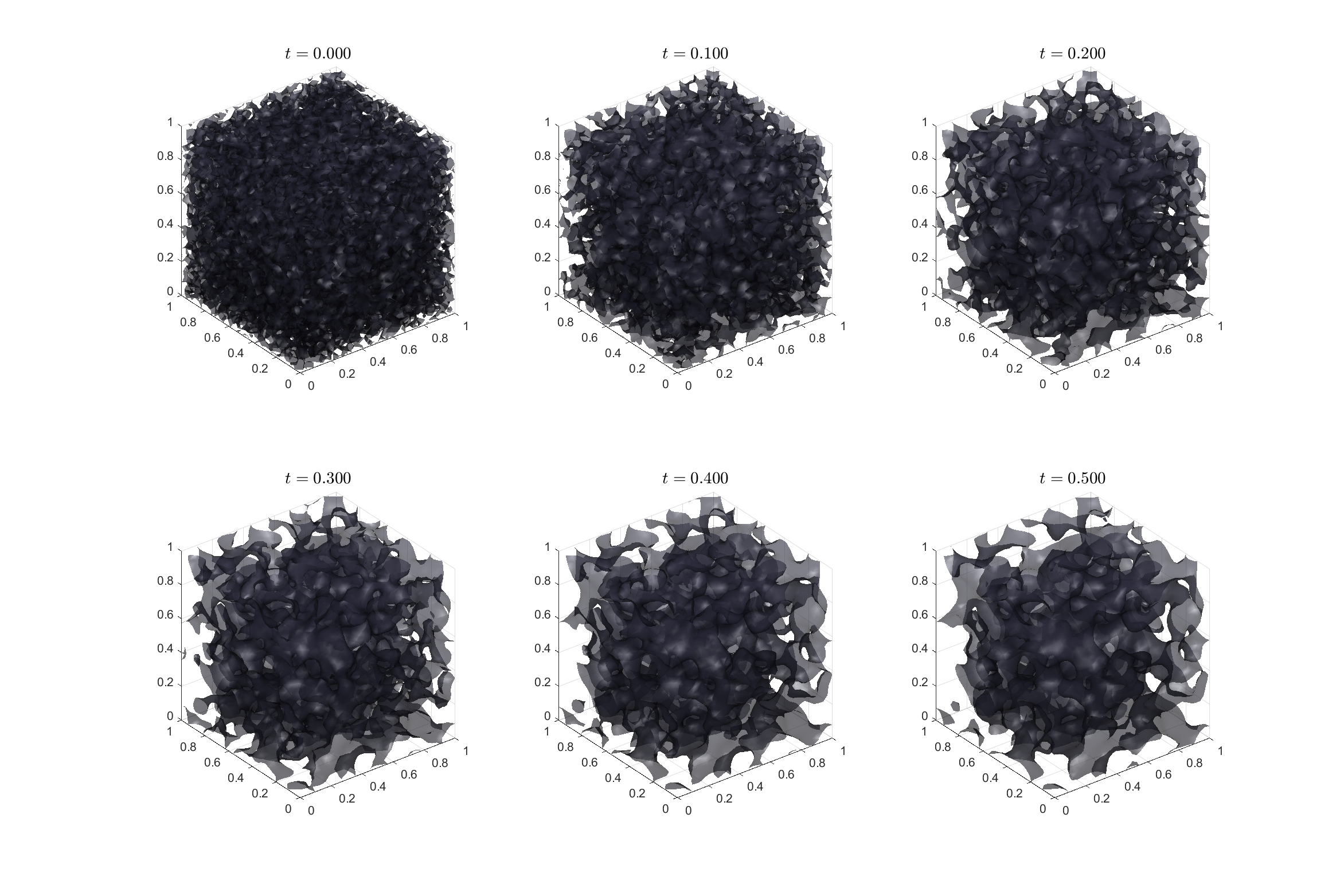}
        \caption{$\epsilon=0.05$}
        \label{fig:35}
    \end{subfigure}
    \caption{3D simulation: the dynamical evolution over time $t$.}
    \label{fig:3_2}
\end{figure}

\begin{figure}[htbp]
    \ContinuedFloat
    \centering
    \begin{subfigure}[b]{1.00\textwidth}
        \centering
        \includegraphics[width=\textwidth]{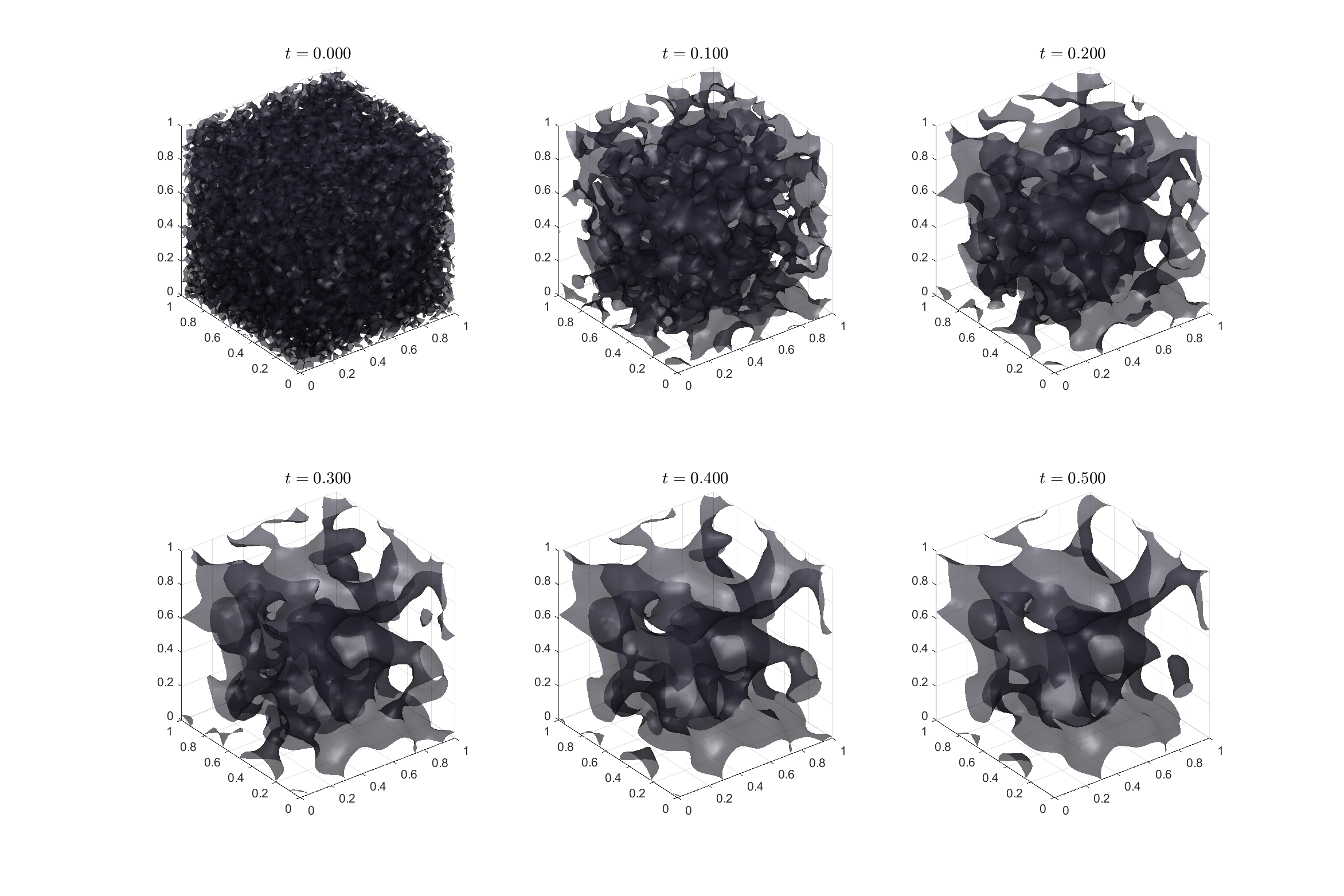}
        \caption{$\epsilon=0.10$}
        \label{fig:36}
    \end{subfigure}
    \caption{3D simulation: the dynamical evolution over time $t$.}
\end{figure}

\begin{figure}[htbp]
    \ContinuedFloat
    \centering
    \begin{subfigure}[b]{1.00\textwidth}
        \centering
        \includegraphics[width=\textwidth]{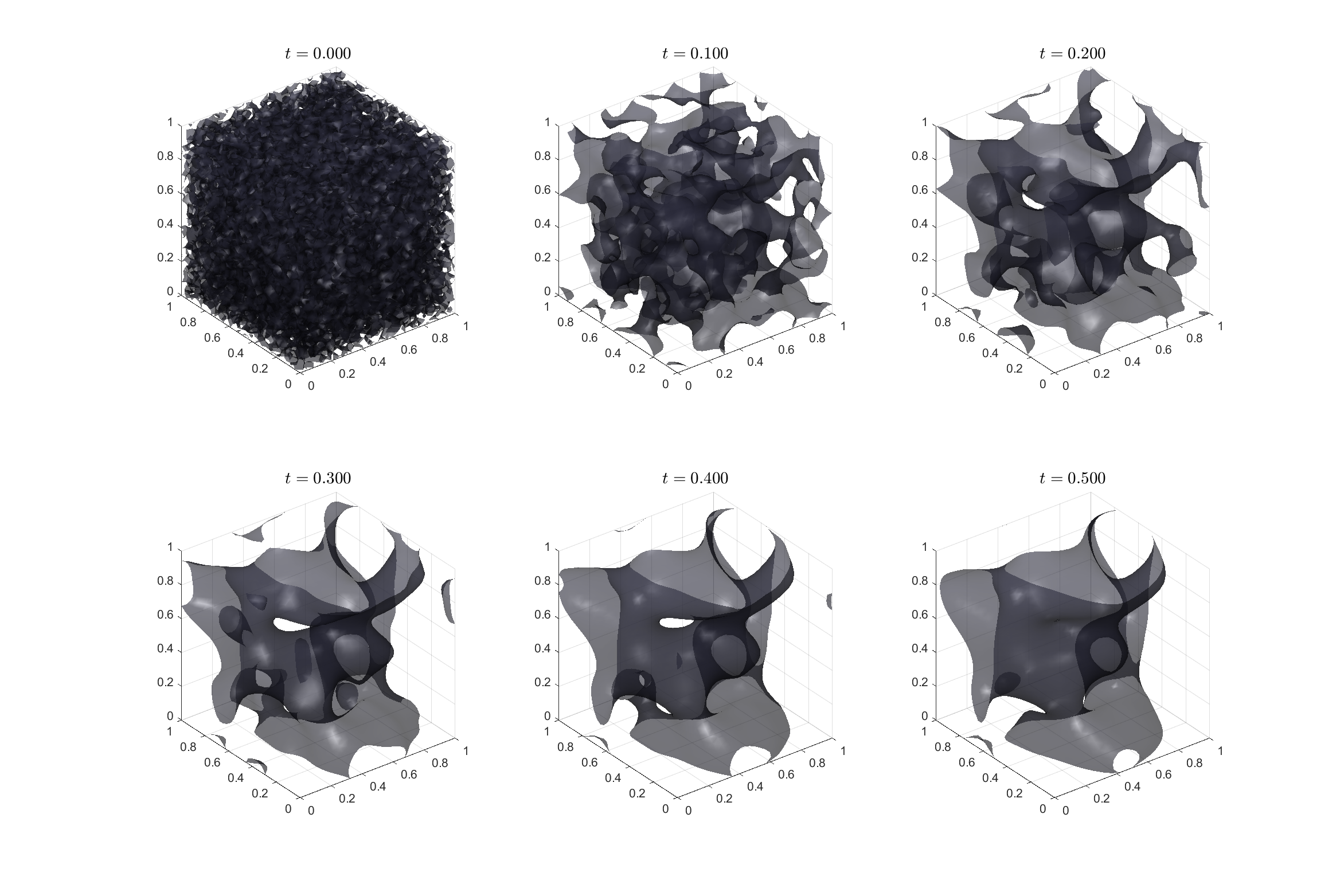}
        \caption{$\epsilon=0.15$}
        \label{fig:37}
    \end{subfigure}
    \caption{3D simulation: the dynamical evolution over time $t$.}
\end{figure}

For $\epsilon=0.05$, the number of iterations, the energy, $1-\max (u)$ and $\min (u)$ over time $t$ are given in \Cref{fig:3_1}. 
\Cref{fig:3_2} shows the interface of $u=0.50$ by interpolating the numerical solution at different time steps from the three-dimensional simulation. 
The results indicate that the algorithm accurately captures the dynamical process of the two-phase interface with satisfactory precision,
which is consistent with the physical phenomenon of phase separation. In summary, the numerical result demonstrates the robustness of our solver in numerical simulation.

\section{Conclusion}
\label{section:conclusion}
In this paper, we present a novel iterative solver for the nonlinear system at each time step, which naturally arises from applying the convex splitting scheme to the Allen-Cahn equation with the Flory-Huggins potential.
We employ an ADMM framework to solve the resulting problem by reformulating the complex nonlinear system equivalently as a convex optimization problem.
Theoretically, we rigorously establish the well-posedness and discrete energy stability of the convex splitting scheme.
More importantly, we provide a mathematical proof of the unconditional linear convergence for the proposed ADMM solver.
This theoretical breakthrough liberates the solver from strict separation assumptions or time-step constraints, effectively overcoming the severe numerical challenges posed by the logarithmic singularities.
The effectiveness and robustness of the algorithm are thoroughly demonstrated through numerical examples, which fully substantiate our theoretical predictions.
Furthermore, while the current work focuses on the Allen-Cahn equation with periodic boundary conditions, the proposed solver framework inherently possesses broad applicability. It can be readily extended to other boundary conditions, such as Neumann and Dirichlet types, and adapted to more general spatial discretizations, including the finite element method \cite{yuan2022second}.


Despite these promising results, several directions merit further investigation:
(a) Constructing effective initial guesses (e.g., via extrapolation) to further reduce iteration counts in long-time evolution simulations.
(b) Extending the framework to second-order convex splitting schemes. While our solver is readily applicable, rigorously preserving the original continuous energy stability for second-order schemes remains an open theoretical challenge in the field \cites{gomez2011provably,yang2016linear} that warrants future study.
(c) Conducting comprehensive benchmark comparisons. Although our theoretical convergence is rigorously validated, evaluating the empirical computational efficiency against existing solvers remains an important ongoing task.

\bibliographystyle{amsplain}
\bibliography{article.bib}

\end{document}